\documentclass{birkjour}

 \usepackage{amsopn}
 \usepackage{amsmath,amsthm,amssymb}

 \newcommand{\nc}{\newcommand}

\nc{\bb}{\mathfrak{b} }
 \nc{\cc}{\mathfrak{c} }  \nc{\dd}{\mathfrak{d} } 
    \nc{\ggo}{\mathfrak{g} }
 \nc{\hh}{\mathfrak{h} }  \nc{\ii}{\mathfrak{i} }
 \nc{\jj}{\mathfrak{j} }  \nc{\kk}{\mathfrak{k} }
\nc{\mm}{\mathfrak{m} }   \nc{\nn}{\mathfrak{n} }
\nc{\pp}{\mathfrak{p} }   
\nc{\rr}{\mathfrak{r} } \nc{\sg}{\mathfrak{s} }
 \nc{\sso}{\mathfrak{so} }  \nc{\spg}{\mathfrak{sp} }
 \nc{\ssu}{\mathfrak{su} }  \nc{\ssl}{\mathfrak{sl} }
 \nc{\tog}{\mathfrak{t} }  \nc{\uu}{\mathfrak{u} }
 \nc{\vv}{\mathfrak{v} } \nc{\ww}{\mathfrak{w} }
 \nc{\zz}{\mathfrak{z} }  
 
  \newcommand{\ggam}{G/\Gamma}
\nc{\CC}{{\mathbb C}}
 \nc{\DD}{{\mathbb D}}
\nc{\FF}{{\mathbb F}}
\nc{\GG}{{\mathbb G}}  
\nc{\HH}{{\mathbb H}}
\nc{\II}{{\mathbb I}}
\nc{\JJ}{{\mathbb J}}
\nc{\KK}{{\mathbb K}}
\nc{\NN}{{\mathbb N}}

\nc{\RR}{{\mathbb R}}  
 \nc{\ZZ}{{\mathbb Z}}  
 
 \newcommand{\Heis}{\mathrm{H}}

\nc{\ggob}{\overline{\mathfrak{g}}} 
 
\nc{\glg}{\mathfrak{gl} }
  
\nc{\pca}{\mathcal{P}} \nc{\nca}{\mathcal{N}}
 
 \nc{\vp}{\varphi} \nc{\ddt}{\frac{{\rm d}}{{\rm d}t}}
 \nc{\la}{\langle} \nc{\ra}{\rangle}
 \nc{\brg}{[\,,\,]_{\ggo}}
 \nc{\brv}{[\,,\,]_{\vv}}

 \nc{\SO}{{\sf SO}} \nc{\Spe}{{\sf Sp}} \nc{\Sl}{{\sf Sl}}
 \nc{\SU}{{\sf SU}} \nc{\Or}{{\sf O}} \nc{\U}{{\sf U}}
 \nc{\Gl}{{\sf Gl}} \nc{\Se}{{\sf S}} \nc{\Cl}{{\sf Cl}}
 \nc{\Spin}{{\sf Spin}} \nc{\Pin}{{\sf Pin}}

 \nc{\ad}{\operatorname{ad}} \nc{\Ad}{\operatorname{Ad}}
 \nc{\coad}{\operatorname{coad}} 
 \nc{\rank}{\operatorname{rank}} \nc{\Irr}{\operatorname{Irr}}
 \nc{\End}{\operatorname{End}} \nc{\Aut}{\operatorname{Aut}}
 \nc{\Inn}{\operatorname{Inn}} \nc{\Der}{\operatorname{Der}}
 \nc{\Ker}{\operatorname{Ker}} \nc{\Iso}{\operatorname{Iso}}
 \nc{\Le}{\operatorname{L}} \nc{\Fe}{\operatorname{F}}
\nc{\tr}{\operatorname{tr}}
 \nc{\dif}{\operatorname{d}} \nc{\sen}{\operatorname{sen}}
 \nc{\modu}{\operatorname{mod}} \nc{\Ric}{\operatorname{R}}
 \nc{\Sym}{\operatorname{Sym}} \nc{\sca}{\operatorname{sc}}
 \nc{\scalar}{{\sf s}} \nc{\grad}{\operatorname{grad}}
 \nc{\ricci}{\operatorname{r}} \nc{\riccin}{\operatorname{Ric}}
 \nc{\Lie}{\operatorname{L}} \nc{\ct}{\operatorname{T}}

\newcommand{\deax}{\partial_x}
\newcommand{\deay}{\partial_y}
\newcommand{\deaz}{\partial_z}
\newcommand{\deat}{\partial_t}


\nc{\mr}{{\mathfrak r}}
\nc{\ms}{{\mathfrak s}}
\nc{\mv}{{\mathfrak v}}
\nc{\lra}{\longrightarrow}
\nc{\R}{{\mathbb R}}
\nc{\Z}{{\mathbb Z}}

\newcommand{\bsh}{\backslash}


 \theoremstyle{plain}
 \newtheorem{thm}{Theorem}[section]
 \newtheorem{prop}[thm]{Proposition}
 \newtheorem{cor}[thm]{Corollary}
 \newtheorem{lem}[thm]{Lemma}
 
 \theoremstyle{definition}

 \theoremstyle{remark}
 \newtheorem{rem}{Remark}
 
 \newtheorem{exa}[thm]{Example}

 \newcommand{\ri}{{\rm (i)}}
 \newcommand{\rii}{{\rm (ii)}}


\begin{document}
\title[Lorentzian compact manifolds]
{Lorentzian compact manifolds: \\ isometries and geodesics}

\author{V. del Barco}
\email{V. del Barco: delbarc@fceia.unr.edu.ar}
\address{ECEN-FCEIA, Universidad Nacional de Rosario \\Pellegrini 250, 2000 Rosario, Santa Fe, Argentina}

\author{G. P. Ovando}
\email{gabriela@fceia.unr.edu.ar}

\address{CONICET -  Universidad Nacional de Rosario, ECEN-FCEIA, Depto de Matem\'atica, Pellegrini 250, 2000 Rosario, Santa Fe, Argentina}

\author{F. Vittone}
\email{F. Vittone: vittone@fceia.unr.edu.ar}

\address{ECEN-FCEIA, Universidad Nacional de Rosario \\Pellegrini 250, 2000 Rosario, Santa Fe, Argentina}

\date{\today}

\begin{abstract} In this work we investigate families  of 
compact Lorentzian manifolds in dimension four. We show that every lightlike geodesic on such 
spaces is periodic, while there are closed and non-closed spacelike and timelike geodesics. 
Their isometry groups are computed. 
We also show that  there is a non trivial action by isometries of $\Heis_3(\RR)$ on the 
nilmanifold $S^1\times (\Gamma_k \bsh \Heis_3(\RR))$ for $\Gamma_k$ a lattice of $\Heis_3(\RR)$.
\end{abstract}

\thanks{{\it (2010) Mathematics Subject Classification}:  53C50 53C22 57S25
22F30. }

\thanks{ }

\maketitle

\section{Introduction}

Due to their relations with general relativity Lorentzian manifolds, i.e. manifolds endowed with
 metric tensors of index 1, play a special role in pseudo-Riemannian geometry. Timelike and null geodesics represent, respectively,
free falling particles and light rays. Isometric actions and the existence problem of closed or periodic geodesics are two of the most popular topics of research in the last time.

\noindent  The known results developed in the field made use of several techniques including  variational and 
topological methods, Lie theory, etc.. (See for instance \cite{EG,Ge,Ge1,Su,Ti,Ze} and references therein). After the classification of simply connected Lie groups acting locally faithfully  by isometries on 
a compact Lorentz manifold \cite{AS,Ze1} some other questions concerning the geometric implications 
 of such actions  arise in a natural way, specially in the noncompact case (see \cite{PZ}). 
In \cite{Me} Melnick investigated the  isometric actions of Heisenberg groups on compact Lorentzian manifolds, showing  a codimension one action of the Heisenberg Lie group $\Heis_3(\RR)$ on the Lorentzian 
compact solvmanifold $M=\Gamma \bsh G$, where
$G = \RR \ltimes \Heis_3(\RR)$ is a solvable Lie group, called the oscillator group. 

\noindent The main purpose of this work is to analyse these topics more deeply  in a family of examples. We study the geometry of families of compact 
Lorent\-zian manifolds
 in dimension four: $M_{k,i}= G/ \Lambda_{k,i}$, which are stationary, that is, they admit an
everywhere timelike Killing vector field (see \cite{FJP}). We get:
\begin{itemize}
 \item Every lightlike geodesic on any compact space $M_{k,i}$ is periodic, while there are closed and non-closed timelike and spacelike geodesics.
\item The isometry groups of these compact spaces  have  a countable  amount of connected components.
\end{itemize}
As already mentioned  the existence question of closed geodesics on compact Lorent\-zian manifold is 
a classical topic in Lorentzian geometry.
 In this context  the results above relative to null geodesics are
 surprising in a quite different situation of those in \cite{Ga1} and therefore they should induce new research in the topic. 

We start   with  an isometric  codimension one   action by isometries of the Heisenberg Lie group
 $\Heis_3(\RR)$ on  compact nilmanifolds $\Lambda_k \bsh N$ where  $N=\RR \times \Heis_3(\RR)$. 
The starting point is the existence of an isometry between the Lorentzian Lie group $G$ which 
is solvable and the Lie group $N$ which is 2-step nilpotent \cite{BO}. This reveals that
 the existence of actions by isometries coming from non-isomorphic groups does not distinguish
 the isometry class of the Lorentzian manifold.  However while the Lorentzian metric  on $G$ is bi-invariant, that one
 on $N$ is only left-invariant. Furthermore there is a family of groups $\Lambda_k$ which are   cocompact 
lattices of $G$ and also of $N$ so that every quotient $\Lambda_k \bsh N$ is diffeomorphic to $\Lambda_k \bsh G$ and the metrics   induced to the quotients give rise to an isometry 
between 
 the compact space $(\Lambda_k \bsh N, g_N)$ and 
$(\Lambda_k \bsh G, g_G)$.
 It  is clear that as an ideal of $G$, the Heisenberg Lie group $\HH_3$ acts isometrically  on $\Lambda_k \bsh G$
 by translations on the right. Therefore the Heisenberg Lie group also acts on $\Lambda_k \bsh N$ by isometries. 
The Lie group $N$ is already known in the literature: it is related to the known {\em Kodaira-Thurston}  
manifold. 
 One of the advantages of the nilmanifold model arises from the Nomizu's Theorem: the de Rahm 
cohomology can be read off from the cohomology of the Lie algebra of $N$.

\noindent The solvable group $G$ admits more cocompact lattices $\Lambda_{k,i}$ which are not isomorphic to the family above.  We explicitly write the full isometry group of 
$G$ which is proved to be non-compact. And making use of results which relate the isometries on 
the quotients with those on $G$ we compute $\Iso(M_{k,i})$ the group of isometries of the compact 
solvmanifolds $M_{k,i}=\Lambda_{k,i}\bsh G$.
 
 We complete the work with the study  of the periodic geodesics on the  compact Lorentzian  
solvmanifolds. It should be noticed 
that  all the Lorentzian  manifolds here are naturally reductive spaces. We notice that
 together with the motivations coming from Lorentzian geometry an active
 research is given for g.o. spaces  ( see for instance \cite{BCL,CV, Du, Du2}). 
The compact Lorentzian spaces $M_{k,i}$ constitute the first examples (known to us) of compact spaces in 
dimension  four where every 
lightlike geodesic is periodic.


\section{Lorentzian nilmanifolds and actions}

Let $\Heis_3(\RR)$ denote the Heisenberg Lie group of dimension three, which modeled over $\RR^3$ 
has a multiplication map given by
$$(x,y,z) (x',y',z')=(x+x',y+y', z+z'+\frac12(xy'-x'y))$$

Let $N$ denote the nilpotent Lie group  $\RR \times \Heis_3(\RR)$, which turns into a 
 pseudo-Riemannian manifold modeled on $\RR^4$ with the following Lorentz\-ian metric 
\begin{equation}\label{metric}
 g= dt (dz + \frac12 y dx - \frac12 x dy) + dx^2 + dy^2
\end{equation}
where $(t,x,y,z)$ are  usual coordinates  for $\RR^4$. 
Denote  $v=(x, y)$ and for each  $(t_1, v_1, z_1)\in \RR^4$ consider the following 
differentiable function on $\RR^4$:
\begin{equation}\label{actionN}
 L^N_{(t_1, v_1, z_1)}(t_2, v_2, z_2)=(t_1+t_2, v_1+v_2, z_1+z_2+\frac12 v_1^t J v_2)
\end{equation}
where $J$ is the linear map on $\RR^2$ given by the matrix
\begin{equation}\label{mat}
J=\left( \begin{matrix} 0 & 1 \\ -1 & 0 \end{matrix} \right). \qquad \qquad
\end{equation}
Clearly $L^N$ is the translation on the left on $N$ by the element $(t_1, v_1, z_1)$ and it
 is not hard to see that the metric $g$ is invariant under the left-translations 
$L^N_{(t_1, v_1, z_1)}$. A basis of left-invariant vector fields at $p=(t,x,y,z)$ is

\begin{eqnarray*}
e_0(p) & = & \deat|_{p}\\ 
e_1(p) & = & \deax|_{p} -   \frac12 \,y \,\deaz|_{p} \\ 
 e_2(p) & = & \deay|_{p}  + \frac12 \,x  \,\deaz|_{p} \\ 
 e_3(p) & = & \deaz|_{p}
 \end{eqnarray*}
and  the invariant Lorentzian metric $g$ satisfies
$$g(e_0, e_3)=g(e_1, e_1) = g(e_2, e_2)=1.$$

Particular examples of closed subgroups are lattices. A {\em lattice} of a Lie group $G$ is a discrete subgroup $\Gamma$  such
that the quotient space  $G/\Gamma$ or $\Gamma \bsh G$ is compact. 

For every $k\in \NN$ consider  $\Lambda_k$ the following lattice in $N$:
$$\Lambda_{k}  =  2\pi \ZZ \times \Gamma_k <N \quad \mbox{where } \quad \Gamma_k=\ZZ \times \ZZ \times \frac 1 {2k} \ZZ< \Heis_3(\RR)$$
for $\Gamma_k$ a lattice in $\Heis_3(\RR)$.

The metric $g$  on $N$ (\ref{metric}) can be induced to the quotient spaces 
$\Lambda_k \bsh N$. In fact denote also by $g$ the induced metric, 
for every $\gamma \in \Lambda_k$ one has:
$$\begin{array}{rclll}
 g(Z_{\gamma x}, Y_{\gamma x})_{\gamma x} & = & g(d p_{\gamma x}(Z), d p_{\gamma x}({Y}))_{p(\gamma x)}  & \\
&  = & g(d p_x(Z), d p_x(Y))_{p(x)} & =  g(Z_x, Y_x)_x
 \end{array}
$$
  thus the canonical projection $p: N \to \Lambda_k\bsh N$ is a local isometry.

The following proposition shows an action of $\Heis_3(\RR)$ on the compact 
nilmanifolds $\Lambda_k \bsh N$ which is not explained in  \cite{Me}.

\begin{prop}\label{pro1} There is a an isometric action of $\Heis_3(\RR)$ on the compact nilmanifold $\Lambda_k\bsh N$ induced by the action of $\Heis_3(\RR)$ on $N$ given as follows:
\begin{equation}\label{acth}
(v',z') \cdot (t,v,z)= (t, v-R(t)v', z-z'-\frac12 v^{\tau}JR(t)v')
\end{equation}
where  
$R(t)$ is the linear map on $\RR^2$ with  matrix given by
\begin{equation}\label{Rt}
R(t)=\left( \begin{matrix} \cos t & -\sin t \\ \sin t & \cos t \end{matrix} \right) \qquad t\in \RR.
\end{equation}
\end{prop}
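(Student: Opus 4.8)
The plan is to prove the statement in three steps: (1) that formula \eqref{acth} defines a smooth left action of $\Heis_3(\RR)$ on $N=\RR\times\Heis_3(\RR)$; (2) that for each $(v',z')$ the corresponding diffeomorphism of $N$ preserves the metric $g$ of \eqref{metric}; and (3) that this diffeomorphism commutes with every translation $L^N_\gamma$, $\gamma\in\Lambda_k$, hence descends to $\Lambda_k\bsh N$, where the descended maps assemble into an action by isometries.

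For step (1) I would write, for $(v',z')\in\Heis_3(\RR)$,
\[
\phi_{(v',z')}(t,v,z)=\bigl(t,\ v-R(t)v',\ z-z'-\tfrac12\, v^{\tau}JR(t)v'\bigr),
\]
so that \eqref{acth} reads $(v',z')\cdot x=\phi_{(v',z')}(x)$. Composing two such maps and using the product $(v_1',z_1')(v_2',z_2')=(v_1'+v_2',\,z_1'+z_2'+\tfrac12\,(v_1')^{\tau}Jv_2')$ of $\Heis_3(\RR)$, one checks $\phi_{(v_1',z_1')}\circ\phi_{(v_2',z_2')}=\phi_{(v_1',z_1')(v_2',z_2')}$; the only non-routine point is the cross term in the last coordinate, which collapses because $R(t)^{\tau}JR(t)=J$ (the matrix $R(t)\in\SO(2)$ commutes with $J$). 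Since $\phi_{(0,0)}=\mathrm{id}$ this is a smooth left action, and it is effective since $R(t)v'=0$ for all $t$ forces $v'=0$.

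For step (2) I would simply check $\phi_{(v',z')}^{*}g=g$. Writing $v'=(a,b)$ and $R(t)v'=(p(t),q(t))$, so that $p'=-q$, $q'=p$ and $p^{2}+q^{2}=a^{2}+b^{2}$ is constant, the last three components $(X,Y,Z)$ of $\phi_{(v',z')}$ satisfy $dX=dx+q\,dt$, $dY=dy-p\,dt$ and $dZ=dz-\tfrac12 q\,dx+\tfrac12 p\,dy-\tfrac12(xp+yq)\,dt$; substituting these into $g=dt\,(dz+\tfrac12 y\,dx-\tfrac12 x\,dy)+dx^{2}+dy^{2}$, the three relations just recorded force every term carrying a $p$ or a $q$ to cancel, and $g$ reappears. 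I expect this to be the crux of the argument: it is the only point where the precise shape of the metric \eqref{metric} and of the twist $R(t)$ must match. A conceptual shortcut is available — by \cite{BO} the space $(\RR^{4},g)$ is isometric to the oscillator group $G=\RR\ltimes\Heis_3(\RR)$ with its bi-invariant metric, and \eqref{acth} is precisely the right action of the normal Heisenberg subgroup of $G$, which is isometric by right invariance — but this costs importing that identification.

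For step (3) I would fix $\gamma=(2\pi m,v_{1},z_{1})\in\Lambda_k$ and use that $R$ is $2\pi$-periodic, so $R(2\pi m+t)=R(t)$; a direct substitution then gives $\phi_{(v',z')}\circ L^{N}_{\gamma}=L^{N}_{\gamma}\circ\phi_{(v',z')}$, and here the factor $2\pi\ZZ$ of $\Lambda_k$ is used essentially, while the lattice $\Gamma_k<\Heis_3(\RR)$ plays no role. Consequently $\phi_{(v',z')}$ permutes the $\Lambda_k$-orbits and induces a diffeomorphism $\overline{\phi}_{(v',z')}$ of $\Lambda_k\bsh N$ with $\overline{\phi}_{(v',z')}\circ p=p\circ\phi_{(v',z')}$. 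Since $p\colon N\to\Lambda_k\bsh N$ is a local isometry (as recorded above) and $\phi_{(v',z')}$ is an isometry of $(N,g)$ by step (2), each $\overline{\phi}_{(v',z')}$ is an isometry of the compact nilmanifold, and $(v',z')\mapsto\overline{\phi}_{(v',z')}$ is a smooth action of $\Heis_3(\RR)$ because $(v',z')\mapsto\phi_{(v',z')}$ is one by step (1). This would complete the proof.
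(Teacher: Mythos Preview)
Your proposal is correct and follows exactly the approach the paper indicates: the paper's proof is the one-sentence sketch ``the proof follows from several computations which can be done by hand: for every $(v',z')\in\Heis_3(\RR)$ the map above defines an isometry on $N$ which can be induced to $\Lambda_k\bsh N$'', and your three steps (action, isometry, descent) carry this out in detail. The conceptual shortcut you mention --- identifying $(\RR^4,g)$ with the oscillator group $G$ with its bi-invariant metric, so that \eqref{acth} is the right action of the normal Heisenberg subgroup --- is precisely what the paper records later as Remark~\ref{6}.
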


The proof follows from several computations which can be done by hand:  
for every $(v',t')\in \Heis_3(\RR)$ the map above (\ref{acth}) defines 
an isometry on $N$ which can be induced to $\Lambda_k \bsh N$. This gives rise 
to an action of $\Heis_3(\RR)$ on the nilmanifold $\Lambda_k\bsh N$.
In next sections we shall explain the construction of the action above (see Remark \ref{6}). 
 
\begin{rem}
The action of $\Heis_3(\RR)$ by isometries on the quotient $\Lambda_k\bsh N$ is neither  
induced by the translations on the left nor on the right on $N$. 
\end{rem}

The orbits of the action of $\Heis_3(\RR)$ on $N$ are parametrized by $t_0\in \RR$:
$$\mathcal O_{(t_0, v_0, z_0)}=\{(t_0, v, z)\in \RR^4\, v\in \RR^2, \, z\in \RR\}$$
and they are not totally geodesic  except for $t=0$ (see geodesics in the next section). 

On $\RR^4$ consider the lightlike distribution
$$\mathcal D_{p}= span\{e_1, e_2, e_3\},$$
which is involutive. Integral submanifolds for $\mathcal D$ are given by the orbits $\mathcal O_p$.

\section{A Lorentzian solvable Lie group}



Recall that if $G$ is a connected real Lie group, its Lie algebra $\ggo$ is identified with the Lie algebra of left-invariant vector fields on $G$.  Assume $G$ is 
  endowed with a left-invariant pseudo-Riemannian metric $\la\,,\,\ra$. Then 
  the following statements are equivalent (see \cite[Ch. 11]{ON}):

\begin{enumerate}
\item $\la\, ,\,\ra$  is right-invariant, hence bi-invariant;
\item $\la\,,\,\ra$  is $\Ad(G)$-invariant;
\item  the inversion map $g \to g^{-1}$ is an isometry of $G$;
\item $\la [X, Y], Z\ra + \la Y, [X, Z] \ra= 0$ for all $X,Y,Z \in \ggo$;
\item $\nabla_XY = \frac12 [X, Y]$ for all $X,Y  \in \ggo$, where
$\nabla$ denotes the Levi Civita connection;
\item  the geodesics of $G$ starting at the identity element $e$ are the 
one parameter subgroups of $G$.
\end{enumerate}

 By (3) the pair $(G, \la\,,\,\ra)$ is a pseudo-Riemannian symmetric space. Furthermore by computing
   the curvature tensor one has
 \begin{equation}
 R(X, Y) = - \frac14 \ad([X, Y]) \qquad \quad  \mbox{ for } X,Y \in
 \ggo.
 \label{curvatura}
 \end{equation}
Thus the Ricci tensor $Ric(X,Y)=tr( Z \to R(Z,X)Y )$ is given by
$$Ric(X,Y)=-\frac14 B(X,Y)$$
where $B$ denotes the Killing form on $\ggo$ given by $B(X,Y)=\tr (\ad(X) \circ \ad(Y))$ for all 
$X,Y \in \ggo$, $\tr$ denotes the usual trace.

Consider the Lie group homomorphism $\rho: \RR \to \Aut(\Heis_3(\RR))$ which on
vectors $(v, z)\in \RR^2 \oplus \RR$ has the form
\begin{equation}\label{R0}
\rho(t)=\left( \begin{matrix}
R(t) & 0 \\
0 & 1 \end{matrix} 
\right) \qquad   \mbox{ where } \qquad 
R(t)  =  {\left( \begin{matrix}
\cos \,t & -\sin \,t \\ \sin \,t & \cos \,t \end{matrix} \right) }.
\end{equation}

Let  $G$  denote the simply connected Lie group which is modelled on the smooth
 manifold $\RR^4$, where the algebraic structure is the resulting from the
semidirect product of $\RR$ and $\Heis_3(\RR)$, via $\rho$. Thus the multiplication is given by
\begin{equation}\label{oper}
 (t,v,z) \cdot (t',v',z')  = (t+t', v+ R(t)v',
z+z'+\frac12 v^T JR(t) v') 
\end{equation}
with $J$ and $R(t)$ as above. The Lie group $G$ is known as the {\em oscillator group}. 

A basis of left-invariant vector fields  at a point $p=(t,x,y,z)$ is given by

\begin{eqnarray*}
X_0(p) & = & \deat|_{p}\\ 
X_1(p) & = & \cos\, t  \,\deax|_{p} + \sin \,t  \,\deay|_{p}
 + \frac12(x \,\sin\, t - y \,\cos \,t) \,\deaz|_{p} \\ 
 X_2(p) & = & -\sin \,t \, \deax|_{p} + \cos \,t \,\deay|_{p}
 + \frac12(x \, \cos\, t + y\, \sin \,t) \,\deaz|_{p} \\ 
 X_3(p) & = & \deaz|_{p} .
 \end{eqnarray*}

These vector fields verify the Lie bracket relations:
 \begin{equation}
[X_0 , X_1 ] = X_2 \quad [X_0 , X_2 ] = -X_1 \quad [X_1 , X_2 ] = X_3
\label{lbg0}
\end{equation}
giving rise to the Lie algebra of $G$, namely $\ggo$. 
On the usual basis of $T_p G$, $\left\{ \deat|_p,\;\deax|_p,\;\deay|_p,\;\deaz|_p\right\}$   the matrix:
\begin{equation}
\left(\begin{matrix}
0 & \frac{1}{2}y &-\frac{1}{2}x& 1\\
\frac{1}{2}y & 1 & 0 & 0\\
-\frac{1}{2}x & 0 & 1 & 0\\
1 & 0 & 0 & 0
\end{matrix}\right);\qquad
\label{gmatrixG0}
\end{equation}
defines a bi-invariant metric on $G$. On canonical coordinates of $\RR^4$ it corresponds to the  
 pseudo-Riemannian metric:
$$
g=dz\,dt+dx^2+dy^2+\frac12(y dx\,dt-x dy\,dt),
$$
which coincides with the metric $g$ (\ref{metric}). 
 
\begin{prop} The Lorentzian manifold $(\RR^4, g)$ for $g$ the Lorentzian metric in (\ref{metric}) 
admits simply and transitive actions of both Lie groups $N$ and $G$.

As a consequence $(N, g)$ is isometric to $(G, g)$.
\end{prop}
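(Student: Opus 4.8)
The plan is to handle the two groups separately and then apply the standard principle that a simply transitive action by isometries identifies the manifold, up to isometry, with the acting group carrying a left-invariant metric.

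First I would note that the underlying manifold of $N=\RR\times\Heis_3(\RR)$ is $\RR^4$, and that under the identification $(t,(x,y),z)\leftrightarrow(t,x,y,z)$ the maps $L^N_{(t_1,v_1,z_1)}$ of (\ref{actionN}) are precisely the left translations of $N$ (a direct comparison of (\ref{actionN}) with the multiplication of $N$). The left translation action of any Lie group on itself is free and transitive, so $N$ acts simply transitively on $\RR^4$; and it was already observed above that each $L^N_{(t_1,v_1,z_1)}$ preserves $g$ --- equivalently, $\{e_0,e_1,e_2,e_3\}$ is a left-invariant frame on which $g$ has constant coefficients. Hence $N$ acts simply transitively by isometries on $(\RR^4,g)$.

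For $G$ the argument is the same. The underlying manifold of $G$ is again $\RR^4$, and $G$ acts on itself simply transitively by left translations. By the discussion preceding the statement, the left-invariant metric on $G$ whose value at $e=(0,0,0,0)$ is given by the matrix (\ref{gmatrixG0}) is bi-invariant, and in canonical coordinates it is exactly $g$ of (\ref{metric}); therefore the left translations of $G$ are isometries of $(\RR^4,g)$, so $G$ acts simply transitively by isometries as well.

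Finally I would conclude with the general fact (cf.\ \cite[Ch.\ 11]{ON}): if a Lie group $H$ acts simply transitively by isometries on a pseudo-Riemannian manifold $(M,h)$, then for any $p_0\in M$ the orbit map $\theta_{p_0}\colon a\mapsto a\cdot p_0$ is a diffeomorphism $H\to M$ and the pullback $\theta_{p_0}^{\,*}h$ is a left-invariant metric on $H$, so $(M,h)$ is isometric to $H$ with a left-invariant metric. Applying this to $H=N$ and to $H=G$, taking in both cases $p_0=(0,0,0,0)$ (the respective group identities, for which the orbit map is the identity of $\RR^4$), we recover $(N,g)$ and $(G,g)$ and obtain the isometry $(N,g)\cong(\RR^4,g)\cong(G,g)$ --- realized concretely by the identity of $\RR^4$, although $N$ is $2$-step nilpotent and $G$ is not nilpotent. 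The only real work in this proof is the two coordinate identifications --- that $L^N$ is the left regular action of $N$, and that (\ref{gmatrixG0}) reproduces (\ref{metric}) --- both of which are essentially already carried out in the preceding text, so I expect no serious obstacle.
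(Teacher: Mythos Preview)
Your proposal is correct and follows essentially the same approach as the paper: both arguments rest on the observation that the left translations of $N$ and of $G$ on their common underlying manifold $\RR^4$ preserve the same metric $g$ (equivalently, the left-invariant extension of $g_e$ by either group coincides with (\ref{metric})), so that the identity map of $\RR^4$ realizes the isometry $(N,g)\cong(G,g)$. The paper's own justification is a one-line sketch referring to \cite{BO}, so your version is simply a more detailed rendering of the same idea.
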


In fact one can see that starting at $(0,0,0,0)\in \RR^4$ the translation on the left (by $N$ or $G$) 
gives the same Lorentzian metric at every point. See \cite{BO}.

\begin{rem} While the metric $g$ is left and right-invariant on $G$, the metric $g$ is only 
left-invariant on $N$. In particular $(G, g)$ and $(N, g)$ is a symmetric space: geodesics are one-parameter groups.
\end{rem}

\begin{rem}
 The Lie group $G$ is the isometry group of a left-invariant Lorentzian metric on the Heisenberg Lie group $\Heis_3(\RR)$ (see \cite{BOV}).
\end{rem}

\subsection{Isometries}  Let $G$ be a connected Lie group with a bi-invariant metric, and let 
    $\Iso(G)$ denote the
isometry group of $G$. This is a Lie group when endowed with the compact-open topology. 
Let $\varphi$ be an isometry such that $\varphi(e)=x$, for $x\neq e$. Then
 $L_{x^{-1}} \circ \varphi$ is an isometry which fixes the element $e\in G$. 
Therefore $\varphi=L_{x} \circ f$ where $f$ is an isometry such that $f(e)=e$.  Let $\Fe(G)$ denote 
the isotropy subgroup of the identity $e$ of $G$ and let
$\Le(G) := \{L_g : g \in G\}$, where $L_g$ is the  translation on the left by $g\in G$. 
Then $\Fe(G)$ is a closed subgroup of $\Iso(G)$ and 
\begin{equation}\label{desciso}
\Iso(G) = \Le(G) \Fe(G) = \{L_g \circ f : f \in \Fe(G), g\in G\}.\end{equation}
Thus $\Iso(G)$ is essentially determined by $\Fe(G)$.

The bi-invariant metric on $G$ implies that it is  a  symmetric space. For locally symmetric spaces one has the 
 Ambrose-Hicks-Cartan theorem (see for example \cite[Thm. 17, Ch. 8]{ON}), which states that
 on a complete locally symmetric 
pseudo-Riemannian   manifold $M$, a linear isomorphism
$A : T_p M \to T_p M$ is the differential of some isometry of $M$ that fixes the point $p$ if and 
only if it preserves the scalar product that the metric induces into the tangent space and if for 
every $u,v, w \in T_p M$
the following equation holds:
$$ R(Au, Av)Aw = AR(u, v)w.$$

By applying this to the Lie group $G$ equipped with a bi-invariant metric and 
whose  curvature formula was given in  (\ref{curvatura}) one gets the next result (see also \cite{Mu}).

 \begin{lem} \label{iso} Let $G$ be a simply connected Lie group with a bi-invariant 
pseudo-Riemannian metric $\la\,,\,\ra$. Then a linear isomorphism $A : \ggo \to \ggo$  
is the differential of some isometry in $\Fe(G)$ if and only if for all $X,Y, Z\in \ggo$, 
the linear map $A$ satisfies the following two conditions:

\vskip 3pt

\ri \quad $\la A X, A Y \ra = \la X, Y\ra $;

\vskip 3pt

\rii \quad $ A[[X, Y], Z] = [[AX, AY], AZ]$.

\vskip 3pt
\end{lem}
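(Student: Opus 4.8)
The plan is to apply the Ambrose--Hicks--Cartan theorem quoted just above to the simply connected symmetric space $(G,\la\,,\,\ra)$. Since $G$ is connected and equipped with a bi-invariant metric, it is a pseudo-Riemannian symmetric space, hence complete and locally symmetric, so the theorem applies with $p=e$ and $T_eG\cong\ggo$. By definition a linear map $A\colon\ggo\to\ggo$ is the differential at $e$ of an isometry fixing $e$ (i.e. of an element of $\Fe(G)$, using that $G$ is simply connected so the local isometry produced extends globally) if and only if $A$ preserves the inner product on $\ggo$ and intertwines the curvature tensor: $R(AX,AY)AZ=A\,R(X,Y)Z$ for all $X,Y,Z\in\ggo$. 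Condition $\ri$ is precisely the first of these. What remains is to show that, given $\ri$, the curvature-intertwining condition is equivalent to $\rii$.

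The key step is to substitute the explicit curvature formula $R(X,Y)=-\tfrac14\ad([X,Y])$ from \eqref{curvatura}. With this, the curvature-intertwining identity reads
\begin{equation*}
-\tfrac14\,[[AX,AY],AZ] \;=\; A\bigl(-\tfrac14\,[[X,Y],Z]\bigr)\qquad\text{for all }X,Y,Z\in\ggo,
\end{equation*}
and cancelling the common factor $-\tfrac14$ gives exactly $\rii$. So the two conditions in the lemma are together equivalent to the two hypotheses of Ambrose--Hicks--Cartan, and the proof is essentially immediate.

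I would spell out the two directions explicitly for clarity. If $A=\dif f_e$ for some $f\in\Fe(G)$, then $f$ is an isometry so $\dif f_e$ preserves $\la\,,\,\ra$, giving $\ri$; and $\dif f_e$ intertwines the curvature tensor (a standard fact about isometries), which, via \eqref{curvatura} and cancellation as above, yields $\rii$. Conversely, if $A$ satisfies $\ri$ and $\rii$, then $A$ preserves the scalar product on $T_eG$ and, reading the cancellation backwards, $A\,R(u,v)w=R(Au,Av)Aw$ for all $u,v,w\in\ggo$; the Ambrose--Hicks--Cartan theorem then produces an isometry of $G$ fixing $e$ whose differential at $e$ is $A$, i.e. an element of $\Fe(G)$.

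The only genuine point to be careful about is the passage from the \emph{local} isometry that Ambrose--Hicks--Cartan guarantees on a complete simply connected locally symmetric space to an actual element of $\Fe(G)$: this is where simple connectedness of $G$ is used, together with completeness (which holds since $(G,\la\,,\,\ra)$ is homogeneous, indeed symmetric). This is really the content of the cited form of the theorem, so there is no serious obstacle; the argument is a direct translation of Ambrose--Hicks--Cartan through the curvature formula \eqref{curvatura}.
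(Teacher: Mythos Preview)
Your proof is correct and follows precisely the paper's own approach: apply the Ambrose--Hicks--Cartan theorem to the symmetric space $(G,\la\,,\,\ra)$ at $p=e$, and then use the curvature formula $R(X,Y)=-\tfrac14\ad([X,Y])$ from \eqref{curvatura} to translate the curvature-intertwining condition into \rii. The paper in fact gives only this one-line indication (``By applying this to the Lie group $G$ \ldots\ whose curvature formula was given in \eqref{curvatura} one gets the next result''), so your write-up is a fleshed-out version of exactly the intended argument.
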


Whenever $G$ is simply connected, every local isometry of $G$ extends to a unique global one. Therefore the full group of isometries of $G$ fixing the identity is isomorphic 
to the group of linear isometries of $\ggo$ that satisfy the conditions  of Lemma \ref{iso}. 
By applying this to our case, one gets the next result (see \cite{BOV}).

\begin{thm} \label{tiso} Let $G$ be the simply connected solvable Lie group 
of dimension four $\RR\ltimes_{\rho} \Heis_3(\RR)$ endowed with the bi-invariant metric $g$. 
Then the group of
isometries fixing the identity element $\Fe(G)$ is isomorphic to
     $(\{1,-1\}\times \Or(2))\ltimes \RR^2$.

In particular the connected component of the identity of $\Fe(G)$ coincides 
with the group of inner automorphisms $\{\chi_g:G\to G,\; \chi_g(x)=gxg^{-1}\}_{g\in G}$.
\end{thm}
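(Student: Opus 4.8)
The plan is to use Lemma~\ref{iso}: since $G$ is simply connected, $\Fe(G)$ is isomorphic to the group of linear isometries $A:\ggo\to\ggo$ of the bi-invariant metric satisfying $A[[X,Y],Z]=[[AX,AY],AZ]$ for all $X,Y,Z$. So the whole computation takes place in the four-dimensional Lie algebra $\ggo$ with basis $X_0,X_1,X_2,X_3$, bracket relations \eqref{lbg0}, and inner product determined by $g(X_0,X_3)=g(X_1,X_1)=g(X_2,X_2)=1$ (all other pairings zero). First I would identify the invariant subspaces forced by the bracket condition: the derived algebra $[\ggo,\ggo]=\spn\{X_1,X_2,X_3\}$ and the center $\zz(\ggo)=\RR X_3$ must be preserved by any such $A$, and then $[[\ggo,\ggo],\ggo]=\RR X_3$ is automatically preserved too. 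Hence $A X_3=\lambda X_3$ for some scalar $\lambda$, and since $X_3$ is lightlike and paired with $X_0$, the isometry condition on the plane $\spn\{X_0,X_3\}$ forces $AX_0=\lambda^{-1}X_0+(\text{something in }\spn\{X_1,X_2,X_3\})$.

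Next I would write $A$ in block form relative to the flag $\RR X_3\subset\spn\{X_1,X_2,X_3\}\subset\ggo$. Writing $AX_1,AX_2$ in terms of the basis and imposing that $A$ is a linear isometry, the $\{X_1,X_2\}$-components of $AX_1,AX_2$ must form a matrix $C$ with $C^tC$ proportional to the identity; combined with the constraint coming from $[X_1,X_2]=X_3$ and $AX_3=\lambda X_3$, one gets $\det C\cdot(\text{scaling})=\lambda$, which together with the isometry normalization pins down $\lambda=\pm1$ and forces $C\in\Or(2)$ (an orthogonal $2\times2$ matrix). The sign of $\lambda$ is the $\{1,-1\}$ factor; the $\Or(2)$ factor is $C$. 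The remaining freedom — the $X_3$-components of $AX_1,AX_2$ and the $X_1,X_2$-components of $AX_0$ — is constrained by the isometry equations (orthogonality of $AX_0$ to $AX_1,AX_2$ forces the $X_1,X_2$-parts of $AX_0$ to be determined by the $X_3$-parts of $AX_1,AX_2$), leaving a two-parameter family, i.e.\ the $\RR^2$. Checking that the bracket condition $A[[X,Y],Z]=[[AX,AY],AZ]$ imposes no further restriction (both sides only ever land in $\RR X_3$, and one verifies the scalar matches) completes the determination of $\Fe(G)$ as a set.

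Then I would verify the semidirect product structure $(\{1,-1\}\times\Or(2))\ltimes\RR^2$: the $\RR^2$ of ``unipotent-type'' isometries (those with $\lambda=1$, $C=\mathrm{id}$, fixing $X_0,X_3$ mod the two free parameters, acting trivially on $X_3$) is a normal abelian subgroup, and conjugation by the $\{1,-1\}\times\Or(2)$ part acts on it linearly through the standard representation (twisted by the sign), giving the claimed action. For the last sentence, I would argue that the connected component $\Fe(G)^0$ corresponds to $\lambda=1$ and $C\in\SO(2)$ together with the full $\RR^2$, and then identify this with $\Inn(G)=\{\chi_g\}$: since the metric is bi-invariant, each $\chi_g=L_g\circ R_{g^{-1}}$ is an isometry fixing $e$, so $\Inn(G)\subseteq\Fe(G)^0$; conversely $\Inn(G)$ is connected of the right dimension (namely $\dim G-\dim\zz(\ggo)=3$) and a dimension/connectedness count, or a direct computation that $d\chi_g=\Ad(g)$ realizes exactly the $\SO(2)\ltimes\RR^2$ block, gives equality.

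\textbf{Main obstacle.} The routine but error-prone part is the bookkeeping in the block-matrix computation: correctly tracking how the off-diagonal ($X_3$-valued) entries of $A$ on $\spn\{X_1,X_2\}$ are tied to the $\spn\{X_1,X_2\}$-entries of $AX_0$ by orthogonality, and confirming that the cubic bracket identity $A[[X,Y],Z]=[[AX,AY],AZ]$ contributes nothing beyond what the isometry condition already gives. The genuinely delicate conceptual point is pinning down $\lambda=\pm1$ (rather than an arbitrary nonzero scalar): this uses both that $X_3$ spans the one-dimensional space $[[\ggo,\ggo],\ggo]$ \emph{and} the metric normalization on the hyperbolic plane $\spn\{X_0,X_3\}$, and it is where the hypotheses genuinely interact.
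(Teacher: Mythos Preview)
Your approach is essentially the paper's: apply Lemma~\ref{iso} to reduce to a linear-algebra problem on $\ggo$, determine the block form of $A$ (the paper simply records the answer as \eqref{isom}, citing \cite{BOV} for the computation you sketch), and then identify $\Fe_0(G)$ with $\Inn(G)$ via the same dimension-and-connectedness count you propose.

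One correction and one subtlety, however. The correction: your claim $[[\ggo,\ggo],\ggo]=\RR X_3$ is false. Since $[[X_0,X_1],X_0]=[X_2,X_0]=X_1$ and $[[X_0,X_2],X_0]=[-X_1,X_0]=X_2$, the image of the triple bracket is all of $\spn\{X_1,X_2,X_3\}$, not just the center. The subtlety: condition~(ii) of Lemma~\ref{iso} does \emph{not} say that $A$ is a Lie-algebra automorphism, so you cannot invoke ``the derived algebra and the center are characteristic'' to conclude they are $A$-invariant. What does work is that $A$ must preserve the image of the trilinear map $(X,Y,Z)\mapsto[[X,Y],Z]$, namely $\spn\{X_1,X_2,X_3\}$, and also its first-slot kernel $\{W:[[W,Y],Z]=0\ \forall\,Y,Z\}$, which here \emph{is} exactly $\RR X_3$. (Alternatively, use that $A$ preserves the Ricci form $\mathrm{Ric}=-\tfrac14 B$; since $B(X_0,X_0)=-2$ is the only nonzero entry of the Killing form, the Ricci kernel is $\spn\{X_1,X_2,X_3\}$.) With these adjustments your filtration argument goes through and yields \eqref{isom}, after which the group identification and the $\Ad(G)=\Fe_0(G)$ step proceed exactly as in the paper.
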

 
The computations (see  \cite{BOV}) show that the differential of an isometry fixing the identity element corresponds to
 $A: \ggo \to \ggo$
 having the following 
matricial presentation  on the basis of left-invariant vector fields $\{X_0, X_1, X_2 , X_3\}$ 
 \begin{equation}\label{isom}
 A=\left( 
 \begin{matrix}
 \pm 1 & 0 & 0 \\
 w & \tilde{A} & 0 \\
 \mp\frac12 ||w||^2 & \mp w^{\tau} \tilde{A} & \pm 1
 \end{matrix}
 \right)
 \end{equation}
where $w\in \RR^2$ and $\tilde{A}\in \Or(2)$.  This gives a group isomorphic to 
$(\{1,-1\}\times \Or(2))\ltimes \RR^2$ for which the identity component corresponds to 
those matrices of the form (\ref{isom}) with
 $a_{00}=a_{33}=1$ and $\widetilde{A}\in \SO(2)=$$\{R(t):t\in\RR\}$.
 
  On the other hand, the set of orthogonal automorphisms of $\ggo$ coincide with the set $\Ad(G)$,
 that is, the matrices of the form  
  $$\Ad(t,v)= \left( \begin{matrix}
  1 & 0 & 0 \\
  Jv & R(t) & 0 \\
  -\frac12 ||v||^2 & -(Jv)^{\tau} R(t) & 1
  \end{matrix}
  \right), \qquad  v\in \RR^2.
  $$
being $A(t,v)=\Ad(t,v,z)$ for $v=(x,y)$. Since both subgroups are connected and have the same dimension, they must coincide.

 \begin{rem} In \cite{BO} more features about the isometry group of $(G,g)$ were studied. It was proved that
 $N = \RR \times \Heis(\RR)$ occurs as a subgroup of $\Iso(G)$ but it is not contained into the
 nilradical of $\Iso(G)$. Furthermore the action of the nilradical on $G$ is not transitive. This shows important differences between the  Riemannian situation and the Lorentzian case, even for 2-step nilpotent Lie groups. 
 \end{rem}
 
 Now we proceed to write explicitly the isometries on $G$.
Since $\Fe(G)$ has four connected components, our aim is to find a representative isometry on
 each of them. 

From Theorem \ref{tiso}, the connected component of the identity 
$$\Fe_0(G)=\{\chi_g:\;g\in G\}\simeq\left(\{1\}\times \SO(2)\right)\ltimes \RR^{2};$$ 
where if $g=(t_0,v_0,z_0)$, with $v_0=(x_0,y_0)$, then for $v=(x,y)$ 
\begin{equation}\label{ig}
\begin{array}{rcl}
\chi_g(t,v,z) & = & (t,v_0+R(t_0)v-R(t)v_0,\\
& & z+\frac12 v_0^{\tau}JR(t_0)v-\frac12 v_0^{\tau}JR(t)v_0-\frac12(R(t_0)v)^{\tau}JR(t)v_0).
\end{array}
\end{equation}

Consider the  semidirect product $G \ltimes G$ given by conjugation: $g \cdot h= \chi_g(h)$ as above.
Then $G \ltimes G$ acts by isometries on the pseudo-Riemannian manifold
 $G$, the first factor acts by conjugation $\chi: G \to \Fe_0(G)$ and the second one by translations
 on the left $L: G \to \Le(G)$, however  this action   is not effective. Since 
$$\chi_g \circ L_h \circ \chi_{g^{-1}}=L_{\chi_g(h)}\qquad (*)$$ the action induces the group homomorphism:
$$G \ltimes G \to \Iso(G) \qquad (h, g) \mapsto L_g \circ \chi_h.$$

The  homomorphism  $\chi: G \to \Fe_0(G)$ has the center of $G$ as kernel
$$Z(G)=\{g\in G \,:\, g x g^{-1}= x \quad \mbox{ for all } x \in G\}$$
and one  gets 
\begin{equation}\label{ker}\Fe_0(G)\simeq G/Z(G) \simeq \SO(2) \ltimes \RR^{2}.
 \end{equation}
 It is not hard to see that the center of $G$ is the subgroup  generated by the element 
of $(0,0,0, 1)$. On the other hand 
 the subgroup $\Le(G)$ is normal in $\Iso_0(G)$ and  the group homomorphism $L: G \to \Le(G)$ has trivial kernel. 
 
Thus  the connected component of the identity (isometry) is 
$\Iso_0(G)=(\SO(2)\ltimes \RR^2) \ltimes G.$

Let $f_1,f_2,f_3:G\to G$ denote the following diffeomorphisms:
\begin{eqnarray}
f_1(t,v,z)&=&(-t,Sv,-z),\ \ \text{ where } S(x,y)=(-x,y)\label{f1}\\ 
f_2(t,v,z)&=&(-t,R(t)v,-z), 
\label{f2}\\ 
f_3(t,v,z)&=&f_1\circ f_2(t,v,z)=(t,R(t)Sv,z) 
\label{f3}
\end{eqnarray}

Usual computations show that $f_i$ is an isometry for $i=1,2,3$ and they belong to different connected components of the
 isometry group.  Thus the other three components of $\Fe(G)$ are 
 $$ \Fe_0(G)\cdot f_1,\qquad \Fe_0(G)\cdot f_2 \qquad\text{and }\ \Fe_0(G)\cdot f_3$$
where $F_0 \cdot f_i=\{ g f_i \, : \, g\in F_0(G)\}$.

\subsection{Geodesics}  \label{geodes} From (\ref{gmatrixG0}) one can  compute the Christoffel 
symbols of the Levi-Civita connection (cf. \cite{ON}) and therefore a curve
 $\alpha(s)=(t(s),x(s),y(s),z(s))$ is a geodesic on $G$ if its components satisfy the 
 second order system of differential equations:

$$\left\{\begin{array}{rcl}
t''(s)&=&0,\\
x''(s)&=&-t'(s)y'(s),\\
y''(s)&=&t'(s)x'(s),\\
z''(s)&=&\frac{1}{2}\;t'(s)(x(s)x'(s)+y(s)y'(s)).
\end{array}\right.
$$ 

On the other hand, if $X_e=\sum_{i=0}^{3}a_{i}X_{i}(e)\in T_{e} G$, then the geodesic
 $\alpha$ through $e$ with initial condition $\alpha'(0)=X_e$ is the integral curve of the 
left-invariant vector field $X=\sum_{i=0}^{3}a_{i}X_{i}$. Then we should have $\alpha'(s)=X_{\alpha(s)}$. 

$\bullet$ If $a_{0}\neq 0$ the components of $\alpha$ must verify the following system
\begin{eqnarray*}
t'(s)&=&a_{0},\\
x'(s)&=&a_{1}\cos a_{0}s-a_{2}\sin a_{0}s,\\
y'(s)&=&a_{1}\sin a_{0}s+a_{2}\cos a_{0}s,\\
z'(s)&=&\frac{1}{2}\left[\frac{a_{1}^{2}}{a_{0}}+\frac{a_{2}^{2}}{a_{0}}+2a_{3}-\left(\frac{a_{2}^{2}}{a_{0}}+\frac{a_{1}^{2}}{a_{0}}\right)\cos a_{0}s\right].
\end{eqnarray*}
and so the geodesic through $e=(0,0,0,0)$ with initial condition $X_e$ satisfies:
\begin{eqnarray*} \label{geodcomp}
t(s)&=&a_{0}s,\\
x(s)&=&\frac{a_{1}}{a_{0}}\sin a_{0}s+\frac{a_{2}}{a_{0}}\cos a_{0}s-\frac{a_{2}}{a_{0}},\\
y(s)&=&-\frac{a_{1}}{a_{0}}\cos a_{0}s+\frac{a_{2}}{a_{0}}\sin a_{0}s+\frac{a_{1}}{a_{0}},\\
z(s)&= &\frac{1}{2}\left[ \left(\frac{a_{1}^{2}}{a_{0}}+\frac{a_{2}^{2}}{a_{0}}+2a_{3}\right)s-\left(\frac{a_{2}^{2}}{a_{0}^{2}}+\frac{a_{1}^{2}}{a_{0}^{2}}\right)\sin a_{0}s \right].\end{eqnarray*}

 If $a_{0}=0$, it is easy to see that $\alpha(s)=(0,a_{1}s,a_{2}s,a_{3}s)$ is the corresponding geodesic.
 
Therefore the exponential map $\exp: \ggo \to G$ is 
$$\exp (X)=
  \displaystyle{\left(a_0,\frac{1}{a_0}(R_0(a_0)J-J) (a_1,a_2)^{\tau},a_3 +\frac{1}{2}\left(\frac{a_1^2}{a_0}+\frac{a_2^2}{a_0}\right)\left(1-\frac{\sin a_0}{a_0}\right)\right)}$$
for $a_{0}\neq 0$, while if $a_{0}= 0$, 
 $$\exp (X)= \displaystyle{ \left(0,a_1,a_2,a_3\right)}.$$

 The geodesic passing through the point $h\in G$,  is the translation on the left by $h$ of the
 one-parameter group at $e$, that is $\gamma(s)= h \exp(sX)$ for $\exp(sX)$ given above. 
  
    

\section{Lorentzian compact manifolds}

Let  $K$ denote a  closed subgroup of $G$ so that  $G/K$ is a differentiable manifold  
endowed with a $G$-invariant metric, that is, a metric such that  the transformations $\tau_h: G/K  \to G/K$ given by $\tau_h (xK)=hx K$ are isometries for all $h\in G$ and such that the natural projection $p:G\to G/K$ is a pseudo-Riemannian submersion. Thus  
 $$\widetilde{\Le}(G/K)= \{\tau_h : h\in G\}$$
   is a subgroup of the isometry group $\Iso(G/K)$ of the quotient space.

If $f\in \Iso(G)$ is an isometry of $G$ we say that $f$ is \textsl{fiber preserving}  if 
$f(gK)=f(g)K$ for every $g\in G$. 
If $f$ is a fiber preserving isometry of $G$, it induces an isometry $\widetilde{f}$ of $G/K$ 
defined by $\widetilde{f}(gH)=f(p(g))$. Observe that left-translations in $G$ are 
fiber preserving and they induce the isometries $\tau_h$ in $G/K$. 

\begin{exa} \label{exis} Let $\Gamma<G$ be a lattice of a Lie group $(G, g)$ which is equipped
 with a bi-invariant metric. Then the metric $g$ of $G$ is induced to both quotients $(G/\Gamma, g)$ and $(\Gamma\bsh G, g)$ (by abuse we name the induced metrics also by $g$). Since the inversion map: $G\to G$ which sends $h \to h^{-1}$ is an isometry of $G$, one induces this map to the quotients: $x\Gamma \to \Gamma x^{-1}$ and one gets that $G/\Gamma$ and $\Gamma \bsh G$ are isometric compact spaces. This isometry enables the computation of the geometry without distinguishing these spaces. Furthermore $G$ acts by isometries on $G/\Gamma$ on the left via the maps $\tau_h$ (as before), $G$ acts isometrically on $\Gamma\bsh G$ on the right $h \cdot \Gamma x= \Gamma x h^{-1}$. 
\end{exa}

\begin{lem} Let $G$ be a Lie group with a bi-invariant metric and let 
$\Gamma$ be a lattice of $G$. Then $G/\Gamma$ admits a $G$-invariant 
metric making of it a naturally reductive pseudo-Riemannian space  and consequently:

\begin{enumerate}
\item $p: G \to G/\Gamma$ is a pseudo-Riemannian covering;

\item The geodesics in $G/\Gamma$  starting at  the point $o = p(e)$ are of the form 
$p(\exp \,tX)$ with $X\in \ggo$. 
\end{enumerate}
\end{lem}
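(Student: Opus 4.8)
The plan is to deduce everything from the descent of the bi-invariant metric together with the list of equivalent conditions recalled above. First I would check that $\la\,,\,\ra$ passes to the quotient: being right-invariant it is in particular invariant under the right action of $\Gamma$, so it defines a metric $g$ on $G/\Gamma$ for which the canonical projection $p\colon G\to G/\Gamma$ is a local isometry --- this is exactly the situation of Example~\ref{exis}. Since $\la\,,\,\ra$ is also left-invariant, every left translation $L_h$ of $G$ is fiber preserving and induces the map $\tau_h\colon x\Gamma\mapsto hx\Gamma$, which is then an isometry of $(G/\Gamma,g)$; hence $g$ is $G$-invariant, i.e. $\widetilde{\Le}(G/\Gamma)\subset\Iso(G/\Gamma)$, and $(G/\Gamma,g)$ is homogeneous. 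For item~(1): a lattice is a discrete subgroup, so $\Gamma$ acts freely and properly discontinuously on $G$ by right translations; therefore $p$ is a covering map, and being a local isometry it is a pseudo-Riemannian covering.

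For natural reductivity I would use the homogeneous presentation of $G/\Gamma$ with $G$ acting by the maps $\tau_h$ and with isotropy group $\Gamma$ at $o=p(e)$. As $\Gamma$ is discrete, its Lie algebra is trivial, so the only admissible reductive complement is $\ggo$ itself and we take the (degenerate) reductive decomposition $\ggo=\{0\}\oplus\ggo$. For this decomposition the naturally reductive condition reads
\[
\la [X,Y],Z\ra + \la Y,[X,Z]\ra = 0\qquad\text{for all }X,Y,Z\in\ggo ,
\]
which is precisely condition~(4) characterizing the bi-invariance of $\la\,,\,\ra$. Hence $(G/\Gamma,g)$ is naturally reductive.

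For item~(2) the quickest route combines (1) with condition~(6): $p$ carries geodesics to geodesics and the geodesics of $G$ issuing from $e$ are the one-parameter subgroups $s\mapsto\exp(sX)$, $X\in\ggo$, so $s\mapsto p(\exp sX)$ is a geodesic of $G/\Gamma$ through $o$; conversely, a geodesic $\gamma$ of $G/\Gamma$ with $\gamma(0)=o$ lifts through the covering $p$ to a geodesic of $G$ starting at $e$, which is of the form $s\mapsto\exp(sX)$ with $X$ the lift of $\gamma'(0)$, whence $\gamma(s)=p(\exp sX)$. Alternatively, (2) is immediate from the naturally reductive structure, since there the geodesics through the base point are the orbits $s\mapsto\exp(sX)\cdot o$ of one-parameter subgroups with $X$ in the reductive complement $\ggo$. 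The only point requiring a little care is the bookkeeping around the degenerate decomposition $\ggo=\{0\}\oplus\ggo$: one must make sure the notion of naturally reductive homogeneous space is applied with the correct transitive group $G$ and discrete isotropy $\Gamma$, and that the induced metric is genuinely $G$-invariant; once that is in place, natural reductivity is no more than a restatement of the assumed bi-invariance.
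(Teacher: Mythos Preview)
Your argument is correct. Note, however, that the paper does not actually prove this lemma: immediately after the statement it simply writes ``See \cite[Ch.~X vol.~2]{KN}, \cite{ON}.'' and moves on. What you have written is precisely the standard unpacking of those references --- descent of the bi-invariant metric along the right $\Gamma$-action, the trivial reductive decomposition $\ggo=\{0\}\oplus\ggo$ for discrete isotropy, and the identification of the naturally reductive condition with the bi-invariance condition~(4). Your two routes to item~(2), via the covering and via natural reductivity, are both valid and are exactly the arguments one extracts from Kobayashi--Nomizu and O'Neill. So there is no discrepancy to report: you have supplied a proof where the paper chose to cite one.
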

See \cite[Ch. X vol. 2]{KN}, \cite{ON}.

We can study  the isometry group of $G/\Gamma$ once one has information about the
 isometry group of $G$, $\Iso(G)$ as follows.

\begin{thm}
Let $G$ be an arcwise-connected, simply connected Lie group with a left-invariant metric
 and $\Gamma$ a discrete subgroup of $G$. Then every isometry ${f}$ of $G/\Gamma$ is induced 
to $G/\Gamma$ by a fiber preserving isometry of $G$. 
\label{fiberpreserving}
\end{thm}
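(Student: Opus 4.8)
The plan is to lift a given isometry $f$ of $G/\Gamma$ to a map of the universal cover $G$ and then verify that the lift is a fiber-preserving isometry of $G$. Since $G$ is simply connected, it is the universal cover of $G/\Gamma$, and the covering map $p\colon G\to G/\Gamma$ is a local isometry with deck transformation group the right translations $R_\gamma$, $\gamma\in\Gamma$ (here $G/\Gamma$ carries the metric induced from the bi-invariant, or at least left-invariant, metric on $G$, so $p$ is a Riemannian/pseudo-Riemannian covering). First I would fix a point $e\in G$ with image $o=p(e)$, pick any $\tilde g\in p^{-1}(f(o))$, and invoke the standard lifting theorem for covering maps: the composition $f\circ p\colon G\to G/\Gamma$, being a map from a simply connected space, lifts through $p$ to a map $\widetilde f\colon G\to G$ with $\widetilde f(e)=\tilde g$ and $p\circ\widetilde f=f\circ p$. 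Because $f$ is an isometry and $p$ is a local isometry, $\widetilde f$ is a local isometry of $G$; as $G$ is connected, simply connected and complete (it is a Lie group with a left-invariant metric, geodesically complete), $\widetilde f$ is in fact a global isometry of $G$, i.e. $\widetilde f\in\Iso(G)$.

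Next I would check that $\widetilde f$ is fiber preserving, i.e. $\widetilde f(g\Gamma)=\widetilde f(g)\Gamma$ for every $g\in G$, equivalently $\widetilde f\circ R_\gamma$ and $R_{\sigma(\gamma)}\circ\widetilde f$ agree for a suitable $\sigma(\gamma)\in\Gamma$. This is the point where the deck-group structure does the work: for each $\gamma\in\Gamma$, the two maps $\widetilde f\circ R_\gamma$ and $\widetilde f$ both cover $f\circ p$ (since $p\circ R_\gamma=p$), so they are lifts of the same map $f\circ p$ and hence differ by a deck transformation, $\widetilde f\circ R_\gamma=R_{\sigma(\gamma)}\circ\widetilde f$ for a unique $\sigma(\gamma)\in\Gamma$. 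A quick diagram chase shows $\gamma\mapsto\sigma(\gamma)$ is a group homomorphism of $\Gamma$ (in fact, since one can run the argument with $\widetilde f^{-1}$ as well, an automorphism), and from $\widetilde f\circ R_\gamma=R_{\sigma(\gamma)}\circ\widetilde f$ we get exactly $\widetilde f(g\Gamma)=\widetilde f(g)\sigma(\Gamma)\Gamma=\widetilde f(g)\Gamma$ — wait, more precisely $\widetilde f(g\gamma)=R_{\sigma(\gamma)}(\widetilde f(g))=\widetilde f(g)\sigma(\gamma)\in\widetilde f(g)\Gamma$, so $\widetilde f$ permutes the fibers of $p$, i.e. it is fiber preserving. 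Finally, the isometry of $G/\Gamma$ induced by the fiber-preserving isometry $\widetilde f$ is, by construction, the map $g\Gamma\mapsto p(\widetilde f(g))=f(p(g))=f(g\Gamma)$, which is $f$ itself; this closes the argument.

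I expect the only genuinely delicate point to be the passage from local isometry to global isometry of $G$ — one must know that a left-invariant pseudo-Riemannian metric on a simply connected Lie group is geodesically complete so that a local isometry defined on all of $G$, matching a global isometry in a neighborhood via the exponential map, extends; in the present paper this is immediate since $(G,g)$ is a symmetric space with $\exp$ given explicitly, hence complete. Everything else is the formal theory of covering spaces (existence of lifts from simply connected spaces, uniqueness up to deck transformations) together with the bookkeeping that the induced map on the quotient is the original $f$; none of this requires the specific structure of the oscillator group, only that $G$ is simply connected and complete and $\Gamma$ discrete, exactly as in the hypotheses.
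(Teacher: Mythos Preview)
Your approach is essentially the paper's: lift $f\circ p$ through the covering $p$ using simple connectedness of $G$, check the lift is a local isometry, and read off fiber preservation from $p\circ\widetilde f=f\circ p$. Your deck-transformation computation for fiber preservation is just a more explicit version of what the paper calls ``immediate.''

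There is one genuine gap, precisely at the step you flag as delicate. You assert that a left-invariant pseudo-Riemannian metric on a Lie group is geodesically complete and that completeness plus simple connectedness then promotes the local isometry $\widetilde f:G\to G$ to a global one. Neither claim is safe in the generality of the theorem. Left-invariant \emph{Lorentzian} metrics on Lie groups can be geodesically incomplete (already on two-dimensional non-abelian examples), and even granting completeness the Riemannian mechanism you have in mind (Hopf--Rinow $\Rightarrow$ a local isometry from a complete manifold is a covering map) is unavailable in indefinite signature. Your fallback, that in the specific oscillator case $(G,g)$ is symmetric and $\exp$ is explicit, rescues the particular application but not the theorem as stated.

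The paper sidesteps all of this with a purely topological move: since $f$ is a diffeomorphism of $G/\Gamma$, one also lifts $f^{-1}\circ p$ to some $\psi:G\to G$; then $\widetilde f\circ\psi$ and $\psi\circ\widetilde f$ are lifts of the identity, hence deck transformations, so $\widetilde f$ is bijective with smooth inverse, i.e.\ a diffeomorphism. A diffeomorphism that is a local isometry is an isometry, and no completeness hypothesis is needed. Swapping your completeness paragraph for this two-line observation closes the gap and brings your argument in line with the paper's.
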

\begin{proof}
Let $f\in \Iso(G/\Gamma)$ and consider $f\circ p:G\to G/\Gamma$. Since $G$ is simply connected, from the Lifting Theorem (cf. \cite[Ch. III, Th. 4.1]{Bre}), there exists a differentiable map $\phi:G\to G$ such that 
\begin{equation}
p\circ \phi=f\circ p.
\label{ind}
\end{equation} 
From the construction of $\phi$ it is not difficult to see that $\phi$ is a diffeomorphism of $G$ 
if $f$ is a diffeomorphism of $G/\Gamma$. Since the projection $p:G\to G/\Gamma$ is a 
pseudo-Riemannian covering map one gets that $\phi$ is a local isometry and therefore an 
isometry. From (\ref{ind}) it is immediate that $\phi$ is fiber preserving and $f$ is induced by $\phi$. 
\end{proof}

Recall that the Lie algebra of the Isometry group is obtained from the Killing vector fields. 
The next lemma states a relationship between the Killing vector fields on $G$ and those on $G/\Gamma$,
 for a lattice $\Gamma < G$.

\begin{lem}\label{kil}
Let $G$ be a Lie group with a left-invariant metric and $\Gamma$ a discrete closed subgroup of $G$. Let $X$ be a 
Killing vector field in $\ggam$ with monoparametric subgroup $\{\varPsi_{t}\}$. Then the horizontal lift 
$\overline{X}$ to $G$ of $X$ (with respect to the pseudo-Riemannian submersion $p:G\to \ggam$)
 is a Killing vector field on $G$ whose monoparametric subgroup $\{\varphi_{t}\}$ verifies
$$\varPsi_{t}\circ p= p \circ\varphi_{t}$$
\end{lem}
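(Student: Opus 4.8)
The plan is to exploit that, since $\Gamma$ is discrete, the projection $p\colon G\to\ggam$ is at once a covering map and a pseudo-Riemannian submersion; its fibres being zero-dimensional, the vertical distribution is trivial and the horizontal distribution is the whole tangent space, so each $dp_g\colon T_gG\to T_{p(g)}\ggam$ is a linear isometry and the horizontal lift of a vector field is nothing but its ordinary $p$-related lift. Concretely, $\overline X$ is the unique vector field on $G$ with $dp_g(\overline X_g)=X_{p(g)}$ for every $g\in G$; locally $\overline X=(dp)^{-1}\circ X\circ p$, which exhibits it as smooth, and by construction $\overline X$ is $p$-related to $X$. Also, being a pseudo-Riemannian submersion with discrete fibres, $p$ is a local isometry, so $g_G=p^{*}g_{\ggam}$.

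Next I would check that $\overline X$ is a Killing vector field on $G$. Since $\overline X$ is $p$-related to $X$, the Lie derivative commutes with the pullback $p^{*}$, whence
$$\mathcal L_{\overline X}\,g_G=\mathcal L_{\overline X}\,(p^{*}g_{\ggam})=p^{*}(\mathcal L_{X}\,g_{\ggam})=0,$$
the last equality because $X$ is Killing on $\ggam$. Thus $\mathcal L_{\overline X}\,g_G=0$ and $\overline X$ is Killing.

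It then remains to produce a global flow $\{\varphi_t\}$ of $\overline X$ and to verify the intertwining $\varPsi_t\circ p=p\circ\varphi_t$. For fixed $g\in G$ consider the curve $\sigma_g(t)=\varPsi_t(p(g))$, which is defined for all $t\in\RR$ because $\{\varPsi_t\}$ is a one-parameter group. By the (unique) path-lifting property of the covering $p$ there is a unique curve $\widetilde\sigma_g\colon\RR\to G$ with $\widetilde\sigma_g(0)=g$ and $p\circ\widetilde\sigma_g=\sigma_g$; set $\varphi_t(g):=\widetilde\sigma_g(t)$. Differentiating $p\circ\widetilde\sigma_g=\sigma_g$ and using $dp(\overline X)=X\circ p$ together with the injectivity of $dp$ gives $\widetilde\sigma_g'(t)=\overline X_{\widetilde\sigma_g(t)}$, so $\widetilde\sigma_g$ is the maximal integral curve of $\overline X$ through $g$, defined on all of $\RR$. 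Hence $\overline X$ is complete, $\{\varphi_t\}$ is its monoparametric subgroup (the relation $\varphi_{s+t}=\varphi_s\circ\varphi_t$ following from uniqueness of lifts, equivalently of integral curves), and $p\circ\varphi_t=\varPsi_t\circ p$ holds by construction.

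The delicate point is precisely the globality of $\{\varphi_t\}$, i.e.\ the completeness of $\overline X$: on a non-complete pseudo-Riemannian manifold a Killing field need not be complete, so no general completeness theorem applies directly. What saves the argument is the covering-space lifting in the last step, which feeds the already global flow $\{\varPsi_t\}$ on $\ggam$ through the unique path-lifting property to furnish the integral curves of $\overline X$ on all of $\RR$; everything else is routine.
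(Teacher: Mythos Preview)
Your proof is correct and in fact more complete than the paper's own argument. You explicitly verify both nontrivial points: that $\overline X$ is Killing (via the naturality $\mathcal L_{\overline X}\,p^{*}g_{\ggam}=p^{*}\mathcal L_X\,g_{\ggam}$, which holds for covariant tensors whenever $\overline X$ is $p$-related to $X$) and that $\overline X$ is complete with the required intertwined flow (via unique path-lifting through the covering $p$). The paper's proof, by contrast, simply takes the existence of the monoparametric subgroups $\{\varphi_t^i\}$ of the lifts for granted and is really aimed at the Remark immediately following the lemma: it writes a general $\varPsi\in\Iso_0(\ggam)$ as a product $\varPsi_1^1\circ\cdots\circ\varPsi_1^n$ of time-$1$ maps of Killing flows and then threads the intertwining relation through a chain of local sections $\sigma_i$ of $p$. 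Your covering-space argument gives the intertwining $\varPsi_t\circ p=p\circ\varphi_t$ for a single flow directly and globally, which is exactly the content of the lemma as stated; the paper's section-patching route is only needed once one wants to lift an arbitrary element of the identity component rather than a single Killing flow.
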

\begin{proof}
Let $\mathfrak{iso}(\ggam)$ and $\mathfrak{iso}(G)$ denote the Lie algebras of the isometry groups of 
$\ggam$ and $G$ respectively. Since $G$ and $\ggam$ are complete, the Lie algebras $\mathfrak{iso}(\ggam)$ and $\mathfrak{iso}(G)$ can be identified with the
 corresponding Lie algebras of Killing vector fields. Therefore, if $\varPsi$ belongs to 
$\Iso_0(\ggam)$ there exist Killing fields $X_{1},\cdots, X_{n}$ in $\ggam$ with
 monoparametric subgroups $\{\varPsi_{t}^{i}\}$ such that 
$$ \varPsi= \varPsi^{1}_{1}\circ\cdots\circ \varPsi^{n}_{1}.$$ 
Let $\overline{X_{i}}$ be the horizontal lift to $G$ of $X_{i}$ (with respect to the 
pseudo-Riemannian submersion
 $p:G\to\ggam$), $i=1,\cdots,n$,  and let $\{\varphi_{t}^{i}\}$ be the associated
 monoparametric subgroups. Let
 $f=\varphi_{1}^{1}\circ\cdots\circ\varphi_{1}^{n}\in \Iso_0(G)$. 

Fix $q\in \ggam$ and let $\sigma_{n}$ be a local section of $p:G\to\ggam$ defined on a neighborhood of $q$ and for each $i=1,\cdots,n-1$, let $\sigma_{i}$ be a local section around $q_{i}=\varPsi_{1}^{i+1}\circ\cdots\circ\varPsi^{n}_{1}(q)$, mapping $q_{i}$ into $\varphi^{i+1}_{1}\circ\cdots\circ\varphi^{n}_{1}(\sigma_{n}(q))$. Then, we must have
$$\varPsi=p \varphi^{1}_{1}\sigma_{1}\circ\cdots\circ p \varphi^{n}_{1}\sigma_{n}=p\circ f\circ 
\sigma_{n}.$$
This decomposition in independent of the choice of the local section and in fact, $$\varPsi\circ p=p\circ f.$$
\end{proof}

\begin{rem}
By the previous lemma any isometry in $\Iso_{0}(\ggam)$ is induced to the quotient by 
an isometry in $\Iso_{0}(G)$.
\end{rem} 

We concentrate our attention now to  the solvable Lie group $G$ equipped with the bi-invariant metric $g$ given in 
(\ref{gmatrixG0}). We shall construct compact manifolds and study their geometry. Consider the following  lattices of $G$. 


Set $\Gamma_k$ the lattice of the Heisenberg Lie group $\Heis_3(\RR)$ given by
$$\Gamma_k=\ZZ \times \ZZ \times \frac 1 {2k} \ZZ\, \qquad k\in \NN.$$

Every lattice $\Gamma_k$ is invariant under the subgroups generated by $\rho (2\pi)$, $\rho (\pi)$ 
and $\rho (\frac \pi 2)$, ($\rho:\RR \to Aut(\Heis_3(\RR)$ as in (\ref{R0})). Consequently we
 have three families of lattices in $G=\RR  \ltimes_{\rho} \Heis_3 (\RR)$:

\begin{eqnarray} 
\Lambda_{k,0} & = & 2\pi \ZZ \ltimes \Gamma_k < G\nonumber\\
\Lambda_{k,\pi} & = & \pi \ZZ \ltimes \Gamma_k < G\label{lati}\\
\Lambda_{k,\pi/2} & = & \frac \pi 2 \ZZ \ltimes \Gamma_k < G.\nonumber
\end{eqnarray}

so that $\Lambda_{k,0} \triangleleft \Lambda_{k,\pi} \triangleleft \Lambda_{k,\pi/2}$, which induce the solvmanifolds
\begin{equation}\label{solvm}
\begin{array}{rcl}
M_{k,0} & =& \Lambda_{k,0}  \bsh G \simeq G/\Lambda_{k,0}\, ,\\
M_{k,\pi} & = & \Lambda_{k,\pi} \bsh G\simeq G / \Lambda_{k, \pi}\, ,\\
M_{k,\pi/2} & = & \Lambda_{k,\pi/2} \bsh G\,\simeq G/\Lambda_{k, \pi/2} .
  \end{array}
\end{equation}

Since subgroups $\Lambda_{k,i}$ are not pairwise isomorphic (see for instance \cite{COS}), 
 they determine non-diffeomorphic solvmanifolds (see for instance \cite{Ra}).

Observe that the action of $\rho(2\pi)$ is trivial, so 

\begin{itemize}
\item $\Lambda_{k,0}=2\pi \ZZ\times \Gamma_k$ (a direct product) and 
\item $M_{k,0}=G/\Lambda_{k,0}$ is diffeomorphic to $\Lambda_{k,0}\bsh G \simeq \Lambda_k\bsh N \simeq S^1 \times \Heis_3(\RR)/\Gamma_k$, a {\em Kodaira Thurston manifold} (see more details in \cite{COS}).
\end{itemize}

Moreover every compact space in the family $M_{k,0}$ admits a symplectic but non-K\"ahler structure,
 but any compact space $M_{k,i}$ $i=\pi, \pi/2$ admits no symplectic structure since the second Betty number
 vanishes (see \cite{COS}). 

\begin{prop} The compact solvmanifolds $M_{k,i}$ for $k\in \NN$ and $i=0, \pi, \pi/2$ are
pseudo-Riemannian  naturally reductive spaces, hence complete.

The solvable Lie group $G=\RR \ltimes \Heis_3(\RR)$ acts by isometries on each of the 
compact spaces $M_{k,i}$ for $k\in \NN$ and $i=2\pi, \pi, \pi/2$.
As a consequence the Heisenberg Lie group $\Heis_3(\RR) < G$ also acts on each of the
 compact spaces $M_{k,i}$ for $k\in \NN$ and $i=0, \pi, \pi/2$.

Both actions are locally faithful.
\end{prop}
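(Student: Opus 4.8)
The plan is to obtain all four assertions from the structural results already established, the only genuine computation being the determination of the kernels of the two actions. Begin with natural reductivity and completeness: fix $k\in\NN$ and $i\in\{0,\pi,\pi/2\}$. Since $\Lambda_{k,i}$ is a lattice of $G$ and $g$ is bi-invariant by (\ref{gmatrixG0}), I would apply the Lemma above on lattices in a Lie group with bi-invariant metric, with $\Gamma=\Lambda_{k,i}$: it equips $G/\Lambda_{k,i}$ with a $G$-invariant metric for which it is a naturally reductive pseudo-Riemannian space, makes $p\colon G\to G/\Lambda_{k,i}$ a pseudo-Riemannian covering, and identifies the geodesics through $o=p(e)$ with the curves $p(\exp tX)$, $X\in\ggo$. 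As $g$ is bi-invariant, the geodesics of $(G,g)$ issuing from $e$ are the one-parameter subgroups $t\mapsto\exp(tX)$, which are defined for all $t\in\RR$, so $(G,g)$ is geodesically complete; since $p\colon G\to M_{k,i}$ is a pseudo-Riemannian covering, $M_{k,i}$ is complete as well. The inversion isometry of Example \ref{exis} transfers all of this to $M_{k,i}=\Lambda_{k,i}\bsh G$.

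For the two actions: given any lattice $\Gamma<G$, the right translation $R_h\colon x\mapsto xh$ commutes with every left translation $L_\gamma$ ($\gamma\in\Gamma$), hence descends to a diffeomorphism of $\Gamma\bsh G$; and since $g$ is bi-invariant, $R_h$ is an isometry of $(G,g)$, so the descended map is an isometry of $\Lambda_{k,i}\bsh G$. This is precisely the right action $h\cdot\Gamma x=\Gamma x h^{-1}$ of $G$ by isometries on $M_{k,i}$ recorded in Example \ref{exis}, and it is available for each of the three lattices $\Lambda_{k,0}$, $\Lambda_{k,\pi}$, $\Lambda_{k,\pi/2}$ (recall $\rho(2\pi)=\mathrm{id}$, so the labels $i=0$ and $i=2\pi$ refer to the same lattice). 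Moreover $\Heis_3(\RR)$ embeds in $G$ as the closed connected subgroup $\exp(\nn)=\{(0,v,z):v\in\RR^2,\ z\in\RR\}$, where $\nn=\mathrm{span}\{X_1,X_2,X_3\}$ is the nilradical of $\ggo$ (see (\ref{lbg0})): restricting the group law of $G$ to this subgroup returns exactly the Heisenberg multiplication of Section 2. Restricting the $G$-action to it yields the claimed isometric action of $\Heis_3(\RR)$ on each $M_{k,i}$.

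Finally, for local faithfulness I must show that the kernel of $G\to\Iso(M_{k,i})$ is discrete. An element $h$ acts trivially iff $\Gamma x h^{-1}=\Gamma x$ for all $x\in G$, that is, iff $xhx^{-1}\in\Gamma$ for all $x\in G$; taking $x=e$ forces $h\in\Gamma=\Lambda_{k,i}$, and then, $G$ being connected and $\Lambda_{k,i}$ discrete, the continuous map $x\mapsto xhx^{-1}$ is constant, so $h\in Z(G)$. Hence the kernel equals $Z(G)\cap\Lambda_{k,i}$, a subgroup of the discrete group $\Lambda_{k,i}$ and therefore discrete; it is nevertheless nontrivial (for instance $(0,0,0,\tfrac1{2k})$ lies in it), so the action is locally but not globally faithful. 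The $\Heis_3(\RR)$-action, being the restriction of the $G$-action to a subgroup, has kernel contained in $Z(G)\cap\Lambda_{k,i}$, hence discrete, so it too is locally faithful. I expect no real obstacle here: every step is either a citation of an earlier Lemma or a short group-theoretic verification, and the only point needing some care is this last one, where one upgrades the condition that $h$ acts trivially to the condition $h\in Z(G)$ via connectedness of $G$ and discreteness of $\Lambda_{k,i}$, after which one merely needs that the resulting kernel, though nontrivial, is discrete.
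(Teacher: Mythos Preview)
Your proof is correct and aligns with what the paper intends: the proposition is stated there without an explicit proof because each assertion is meant to follow directly from the preceding Lemma on lattices in groups with bi-invariant metric and from Example~\ref{exis}, exactly as you invoke them. Your added computation of the kernel of the $G$-action as $Z(G)\cap\Lambda_{k,i}$ (via the connectedness-plus-discreteness trick) is the right way to substantiate local faithfulness, which the paper leaves to the reader.
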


\begin{rem} \label{6} The action of $\Heis_3(\RR)$ on $\Lambda_k \bsh N$ of Proposition \ref{pro1} is induced by the
 right action of $G$ on $M_{k,0}\simeq \Lambda_{k,0}\bsh G \simeq \Lambda_k \bsh N$:
$$(v', z')\cdot \Lambda_{k,0}(t,v,z)=\Lambda_{k,0}((t,v,z) (0,v',z')^{-1})$$
where on the right side we are considering the multiplication map of $G$. Since the metric 
is bi-invariant the right-translation is also an isometry. 
\end{rem}

\subsection{Isometries of the compact spaces $M_{k,s}$}
Our goal now is to study  the isometry groups of the compact spaces $M_{k,s}$. 

 Notice that all translations on the left $L_h$ for $h\in G$
 are fiber preserving isometries. Direct computations show that the only isometries in $\Fe(G)$ 
that are fiber preserving are the inner homomorphisms $\chi_{h}$ with 
$h\in \mathcal N_{G}(\Lambda_{k,s})$, the normalizer of $\Lambda_{k,s}$ in $G$.

\begin{lem} \label{normal} For the lattices of $G$ described in (\ref{lati}), namely
 $\Lambda_{k,i}$ for every $k\in \NN$, let $M_{k,i}=G/\Lambda_{k,i}$.

\begin{itemize}

\item The only isometries in $F(G)$ that are fiber preserving are the inner
 homomorphisms $\chi_{h}$ with $h\in \mathcal N_{G}(\Lambda_{k,s})$. 

\item The normalizers in $G$ of these lattices are given by
\begin{enumerate}
\item $\mathcal N_{G}(\Lambda_{k,0})= \frac{\pi}{2}\ZZ\ltimes (\frac{1}{2k}\ZZ \times \frac{1}{2k}\ZZ\times \RR),$
\item $\mathcal N_{G}(\Lambda_{k,\pi})= \frac{\pi}{2}\ZZ\ltimes (\frac{1}{2}\ZZ \times \frac{1}{2}\ZZ\times \RR)$,
\item Set $\mathcal W=\{(m,n)\in\ZZ^2: m \equiv n \;(\mbox{mod } 2)\}$ then
$$\mathcal N_{G}(\Lambda_{k,\frac{\pi}{2}})= \left\{\begin{array}{ll}
\frac{\pi}{2}\ZZ\ltimes (\mathcal W\times \RR) & \mbox{ for }k=1,\\
\frac{\pi}{2}\ZZ\ltimes (\frac{1}{2}\mathcal W\times \RR) & \mbox{ for } k\geq 2.\end{array}\right.
$$

\end{enumerate}
\end{itemize}
\end{lem}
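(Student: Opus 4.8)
The plan is to compute the normalizer $\mathcal N_G(\Lambda_{k,i})$ directly using the explicit multiplication law (\ref{oper}) of $G$. First I recall that an element $g=(t_0,v_0,z_0)\in G$ normalizes a lattice $\Lambda_{k,i}$ if and only if conjugation $\chi_g$ maps a set of generators of $\Lambda_{k,i}$ back into $\Lambda_{k,i}$, and (since $\Lambda_{k,i}$ is discrete and $\chi_g$ depends continuously on $g$) it suffices to keep track of where the generators go. Each $\Lambda_{k,i}$ is generated by the ``time'' generator $\tau_i=(i,0,0)$ (with $i\in\{2\pi,\pi,\pi/2\}$), the two ``$v$-directions'' $(0,e_1,0)$ and $(0,e_2,0)$, and the central generator $(0,0,\tfrac1{2k})$. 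Using (\ref{oper}) one finds that conjugation by $(t_0,v_0,z_0)$ acts on the $\Heis_3$-part by $\rho(t_0)$ composed with an inner automorphism of $\Heis_3(\RR)$ determined by $v_0$, and it fixes the time coordinate; in particular the central element $(0,0,\tfrac1{2k})$ is always fixed, so no condition comes from it.

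The computation then splits into two independent requirements. \emph{Condition on $t_0$:} conjugation must send the time generator $\tau_i=(i,0,0)$ to another element of $\Lambda_{k,i}$; since $\chi_g(i,0,0)$ has time coordinate $i$ and $\Heis_3$-part of the form $((I-\rho(t_0)R(i)\rho(t_0)^{-1}\text{-type})v_0,\ast)$, one must have $R(i)$ preserving the relevant integral lattice $\ZZ^2\subset\RR^2$ after conjugation, which forces $R(t_0)$ to normalize that lattice in $\Or(2)$; the rotations by multiples of $\pi/2$ are exactly those preserving $\ZZ^2$, giving $t_0\in\tfrac\pi2\ZZ$ in every case. This explains the common factor $\tfrac\pi2\ZZ$ in all three formulas. \emph{Condition on $v_0$:} conjugation by $(0,v_0,0)$ sends $(0,w,0)$, $w\in\ZZ^2$, to $(0,w,\ v_0^T J(R(\cdot)-I)w+\cdots)$ roughly speaking — the $v$-part is unchanged but a central shift $\langle Jv_0,w\rangle$-type term appears, and for this shift to land in $\tfrac1{2k}\ZZ$ for all lattice vectors $w$ one needs $v_0$ to lie in a dilate of $\ZZ^2$ determined by $k$ and by the interaction with the allowed rotations $R(t_0)$. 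Carrying this out carefully for each of the three lattices — keeping track of which rotations by $\pi/2$ are already in $\Lambda_{k,i}$, since those modify the precise sublattice of $\RR^2$ that $v_0$ must lie in — yields the three stated answers, including the case distinction $k=1$ versus $k\geq 2$ for $\Lambda_{k,\pi/2}$ and the congruence condition defining $\mathcal W$. Finally, the $z_0$-coordinate is completely free, since it contributes nothing to conjugation (the center is central), accounting for the $\RR$-factor in each normalizer.

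For the first bullet, I combine the decomposition (\ref{desciso}), namely $\Iso(G)=\Le(G)\Fe(G)$, with Theorem \ref{tiso} and the explicit form (\ref{ig}) of the inner isometries. Any fiber-preserving isometry can be written $L_h\circ f$ with $f\in\Fe(G)$; since left translations are always fiber preserving, $L_h\circ f$ is fiber preserving iff $f$ is. Writing $f$ in the form (\ref{isom}) and demanding $f(g\Lambda_{k,i})=f(g)\Lambda_{k,i}$ for all $g$, one first shows $f$ must be orientation- and time-orientation-preserving (the components $f_1,f_2,f_3$ fail to preserve the fibration because they reverse $t$, moving the ``time'' generator $\tau_i$ to $-\tau_i$, which is not compatible with the left coset structure unless combined with something that is not in $F(G)$), so $f\in\Fe_0(G)=\{\chi_h:h\in G\}$ by Theorem \ref{tiso}; then $\chi_h$ is fiber preserving iff $\chi_h(\gamma)\in\Lambda_{k,i}$ conjugated appropriately, i.e. iff $h$ normalizes $\Lambda_{k,i}$.

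\textbf{Main obstacle.} The routine part is the bookkeeping, but the genuinely delicate point is the $v_0$-condition for $\Lambda_{k,\pi/2}$: here the lattice already contains the rotation by $\pi/2$, so conjugating the $v$-generators interacts with this rotation and the resulting central cocycle term $v_0^T J R(\pi/2) w$ must be analyzed modulo $\tfrac1{2k}\ZZ$ simultaneously for all $w$ in the orbit of $\ZZ^2$ under $\langle R(\pi/2)\rangle$; this is what produces the parity constraint $m\equiv n\pmod 2$ defining $\mathcal W$ and the subtle dependence on whether $k=1$ or $k\geq2$. I expect that step to require the most care; the $t_0$-condition and the freedom of $z_0$ are comparatively immediate.
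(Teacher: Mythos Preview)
Your overall strategy—compute $\chi_g(\gamma)$ directly from the conjugation formula (\ref{ig}) and read off the constraints on $(t_0,v_0,z_0)$—is exactly what the paper does, and your identification of the ``delicate point'' (the $\Lambda_{k,\pi/2}$ case) is accurate. Two corrections are needed, however.

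\medskip

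\textbf{Misattributed constraints.} You have the source of the $t_0$-condition backwards. Conjugating the time generator gives $\chi_g(i,0,0)=(i,(I-R(i))v_0,\ast)$, which constrains $v_0$, not $t_0$. The condition $t_0\in\frac{\pi}{2}\ZZ$ comes instead from conjugating the $v$-generators: $\chi_g(0,w,0)=(0,R(t_0)w,\ast)$ forces $R(t_0)$ to preserve $\ZZ^2$. Conversely, for $\Lambda_{k,\pi}$ and $\Lambda_{k,\pi/2}$ the restrictive conditions on $v_0$ (namely $v_0\in\frac{1}{2}\ZZ^2$, respectively $v_0\in\mathcal W$ or $\frac12\mathcal W$) come primarily from the term $(I-R(i))v_0$ in (\ref{n1}) applied to the \emph{time} generator, not from the central cocycle you emphasize. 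The paper avoids this bookkeeping trap by imposing (\ref{n1})--(\ref{n2}) for a generic $\gamma\in\Lambda_{k,i}$ rather than working generator-by-generator; you may find that cleaner.

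\medskip

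\textbf{A genuine gap in the first bullet.} Your argument that $f_1,f_2,f_3$ ``reverse $t$, moving $\tau_i$ to $-\tau_i$'' is wrong for $f_3$: by (\ref{f3}), $f_3(t,v,z)=(t,R(t)Sv,z)$ fixes the $t$-coordinate. So your exclusion of the component $\Fe_0(G)\cdot f_3$ is unjustified as written. The paper itself merely asserts that ``direct computations show'' only the $\chi_h$ with $h\in\mathcal N_G(\Lambda_{k,s})$ are fiber preserving, and the written proof of the lemma treats only the normalizer computation; so you will need to supply this step yourself. A workable route: $f_3$ is not a group homomorphism (the factor $R(t)$ depends on the point), so $f_3(g)^{-1}f_3(g\gamma)$ genuinely depends on $g$, and one can exhibit $g\in G$, $\gamma\in\Lambda_{k,s}$ for which it leaves $\Lambda_{k,s}$. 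The same must then be checked for all of $\Fe_0(G)\cdot f_3$, not just $f_3$ itself.
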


\begin{proof}
Let $\Lambda_{k,0}$ be the lattice of $G$ given in (\ref{lati}). Let $g=(t_0,v_0,z_0)\in G$ with $v_0=(x_0,y_0)\in\R^2$, be an element in the normalizer of $\Lambda_{k,0}$. 
Let $\gamma=(t,v,z) \in \Lambda_{k,0}$ where $v=(x,y)$. Thus from the formulas in (\ref{ig}) the condition  $\chi_h(\gamma)\in\Lambda_{k,0}$ gives 

\begin{eqnarray}
v_0+R(t_0) v - R(t)v_0 & \in & \ZZ \times \ZZ \label{n1} \\
z+\frac12 v_0^tJR(t_0)v-\frac12 v_0^tJR(t)v_0-\frac12(v^t R(-t_0)JR(t)v_0) & \in & \frac1{2k} \ZZ \label{n2}
\end{eqnarray}

Since $t\in 2\pi \ZZ$ then $R(t)\equiv Id$, thus  $R(t_0) v\in \ZZ \times \ZZ$ for $v\in \ZZ \times \ZZ$ 
which implies 
\begin{equation}
t_0 = \frac{\pi}2r  \qquad \mbox{ for  some } \quad  r\in \ZZ \label{t0}.
\end{equation}
 Now using this in (\ref{n2}) one gets 
 \begin{equation} \label{v0}
 v_0\in \frac1{2k}\ZZ \times \frac1{2k}\ZZ.
 \end{equation} 

Canonical computations show that 
$g=(\frac{\pi}2 r, \frac1{2k} p, \frac1{2k} q, s) \in \mathcal N_{G}(\Lambda_{k,0})$ 
for  all $r,p,q \in \ZZ$ and $s\in \RR$. 

\medskip

For $\Lambda_{k, \pi}$ an element $h=(t_0, v_0, z_0)\in G$ which belongs 
to $\mathcal N_{G} (\Lambda_{k, \pi})$ must satisfy equations (\ref{n1}) and (\ref{n2}). 
Observe that elements of the form $\gamma=(2\pi s, m, n, \frac1{2k} z )\in \Lambda_{k,\pi}$. 
Therefore $h$ must satisfy the conditions above (\ref{t0}, \ref{v0}). 

For $t=\pi s$ with $s\equiv 1$ (mod 2) the condition (\ref{n1}) implies 
that $v_0\in \frac12 \ZZ \times \frac12 \ZZ$. Finally usual computations give
 $\mathcal N_{G}(\Lambda_{k,\pi})=\frac{\pi}2\ZZ \ltimes (\frac12 \ZZ \times \frac12 \ZZ \times \RR)$.  

For the lattice $\Lambda_{k,\frac{\pi}2}$ notice that  we can use conditions obtained for the other two families of lattices. Thus assume that $h\in \mathcal N_{G}(\Lambda_{k, \frac{\pi}2})$ has the form $g=(\frac{\pi}2 r, \frac12 p, \frac12 q, z_0)$ for $r,p,q\in \ZZ$, $z_0\in \RR$. Thus we should analyse equations (\ref{n1}) and (\ref{n2}) for $t \in \pm \frac{\pi}2 + 2\pi \ZZ$. 

Condition (\ref{n1}) implies $p\equiv q$ (mod 2). Imposing this together with condition (\ref{n2}) accounts to $v_0 \in \frac12 (\ZZ \times \ZZ)$ for $k\geq 2$ or $v_0\in \ZZ \times \ZZ$ for $k=1$.
\end{proof}

Once one knows which isometries of $G$ are fiber preserving, to study the isometry group of
 $M_{k,i}$ one should  determine, among others, 
 which of these isometries act effectively on $M_{k,i}$ for $i=0,\pi,\pi/2$. 



Thus to determine the isometry group of the compact space $M_{k,i}$ we need to find the kernel of the map
$$\widetilde{\chi}: \mathcal{N}_{G}(\Lambda_{k,s}) \to \Iso(M_{k,i})\qquad \mbox{ such that }\quad h \to \widetilde{\chi}_{h}$$
and the kernel of the map 
$$\widetilde{\tau}: G \to \widetilde{\Le}(M_{k,s}) \qquad \mbox{ such that }\quad h \to \tau_h.$$

One obtains
\begin{equation}\label{isof}
Im(\widetilde{\chi}):=\widetilde{\Fe}(M_{k,i})\simeq  \mathcal{N}_{G}(\Lambda_{k,s}) /\{h\in \mathcal{N}_{G}(\Lambda_{k,s})\, :\, h=(2\pi s, 0, r): s\in \ZZ, r\in \RR\}\end{equation}

and 
\begin{equation}\label{lc}
Im(\widetilde{\tau}):= \widetilde{\Le}(M_{k,s})\simeq G/\{h\in G\,/\, h=(2\pi s, 0,z): s\in \ZZ, z\in \frac1{2k}\ZZ\}.
\end{equation}

\begin{thm}
Let $M_{k,s}$ denote the solvmanifolds of dimension  four as in (\ref{solvm}) equipped with the naturally reductive metric induced by the bi-invariant metric of $G$ given by $g$ (\ref{metric}). Then the isometry group of $M_{k,s}$ is given by
$$\Iso(M_{k,s})=\widetilde{\Fe}(M_{k,i})\cdot \widetilde{\Le}(M_{k,s})$$
where $\widetilde{\Fe}(M_{k,i})$ is the group in (\ref{isof}) and $\widetilde{\Le}(M_{k,s})$ is the group
 in (\ref{lc}).

Moreover 
\begin{itemize}
\item $\widetilde{\Le}(M_{k,s})$ is a normal subgroup  and 
\item $\widetilde{\mathcal N}(M_{k,s})\cap\widetilde{L}(M_{k,s})=
\{ \tau_Z\circ \widetilde{\chi}_\gamma, \, \mbox{ where } Z:=(0,0,0,z) \, z\in \RR,\, \gamma \in \Lambda_{k,s}\}$.
\end{itemize}
\end{thm}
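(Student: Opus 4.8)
The plan is to assemble the theorem from the structural facts already established in the excerpt. First I would recall that by Theorem \ref{fiberpreserving} every isometry of $M_{k,s}$ is induced by a fiber-preserving isometry of $G$, and that by (\ref{desciso}) every isometry of $G$ is of the form $L_g\circ f$ with $f\in \Fe(G)$. Combining this with Lemma \ref{normal}, which identifies the fiber-preserving isometries in $\Fe(G)$ as exactly the $\chi_h$ with $h\in\mathcal N_G(\Lambda_{k,s})$ (together with the appropriate representatives $f_1,f_2,f_3$ when those normalize the lattice), one concludes that $\Iso(M_{k,s})$ is generated by the images $\widetilde\tau_g$ of left translations and the images $\widetilde\chi_h$ of inner automorphisms by normalizer elements. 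This gives the decomposition $\Iso(M_{k,s})=\widetilde{\Fe}(M_{k,i})\cdot\widetilde{\Le}(M_{k,s})$, where the two factors are the images of $\widetilde\chi$ and $\widetilde\tau$, whose domains modulo kernels are computed in (\ref{isof}) and (\ref{lc}). The identity $(*)$, $\chi_g\circ L_h\circ\chi_{g^{-1}}=L_{\chi_g(h)}$, descends to the quotient and shows $\widetilde{\Le}(M_{k,s})$ is preserved under conjugation by $\widetilde\chi_h$; since it is also clearly normalized by itself, it is a normal subgroup of the product, establishing the first bullet.

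For the second bullet I would compute the intersection $\widetilde{\Fe}(M_{k,i})\cap\widetilde{\Le}(M_{k,s})$ directly. An element of the intersection is represented by an isometry of $G$ that is simultaneously (up to the kernels in (\ref{isof}) and (\ref{lc})) of the form $\chi_h$ with $h\in\mathcal N_G(\Lambda_{k,s})$ and of the form $L_g$ with $g\in G$. So I need to determine for which $g,h$ one has $L_g=\chi_h$ as maps $G\to G$, equivalently $g\,x = h\,x\,h^{-1}$ for all $x$, equivalently $g = h\,x\,h^{-1}\,x^{-1}$ for all $x$; taking $x=e$ forces $g=e$, and then $\chi_h=\mathrm{id}$ forces $h\in Z(G)$. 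By the discussion after (\ref{ker}), $Z(G)$ is the one-parameter subgroup $\{(0,0,0,z):z\in\RR\}$. However, on the quotient the maps $\widetilde\tau_g$ and $\widetilde\chi_h$ need only agree as maps of $M_{k,s}$, not of $G$, so more pairs survive: $\widetilde\tau_g=\widetilde\chi_h$ iff $L_g$ and $\chi_h$ differ by a deck transformation of the covering $p:G\to M_{k,s}$, i.e. by left translation by some $\gamma\in\Lambda_{k,s}$. Writing this out, $\widetilde\tau_g\circ\widetilde\chi_h^{-1}$ is trivial on $M_{k,s}$ precisely when $L_g\circ\chi_h^{-1}=L_\gamma$ for some $\gamma\in\Lambda_{k,s}$, and unwinding as above this gives $g=\gamma z$ with $z\in Z(G)$ and $h\in Z(G)\cdot(\text{something in }\Lambda_{k,s})$; collecting the resulting elements yields exactly the stated set $\{\tau_Z\circ\widetilde\chi_\gamma : Z=(0,0,0,z),\ z\in\RR,\ \gamma\in\Lambda_{k,s}\}$.

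I expect the main obstacle to be the bookkeeping in the intersection computation: one must carefully track the two kernels appearing in (\ref{isof}) and (\ref{lc}) and the deck group $\Lambda_{k,s}$ simultaneously, since an equality of induced maps on $M_{k,s}$ corresponds not to an equality on $G$ but to an equality modulo $\Lambda_{k,s}$ acting on the left. Concretely one writes a general $\widetilde\chi_h$ with $h\in\mathcal N_G(\Lambda_{k,s})$ using the explicit formula (\ref{ig}) for $\chi_g$, sets it equal to a general $\tau_{g'}$ composed with a deck transformation $L_\gamma$, and solves the resulting system in the coordinates $(t,v,z)$; the $t$- and $v$-components pin down $h$ up to the center and up to $\Lambda_{k,s}$, and the $z$-component is automatically consistent because $Z(G)$ acts trivially by conjugation and $\tau_Z$ for $Z$ central is exactly the ambiguity that remains. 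The rest — normality of $\widetilde{\Le}(M_{k,s})$ via $(*)$, and the product decomposition via Theorem \ref{fiberpreserving} and Lemma \ref{normal} — is essentially formal once the earlier results are invoked.
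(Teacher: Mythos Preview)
Your proposal is correct and follows essentially the same route as the paper, which presents the theorem as a direct consequence of the preceding setup (Theorem \ref{fiberpreserving}, the decomposition (\ref{desciso}), Lemma \ref{normal}, and the kernel computations (\ref{isof})--(\ref{lc})) rather than giving a separate proof. One small clarification: by Lemma \ref{normal} the \emph{only} fiber-preserving elements of $\Fe(G)$ are the $\chi_h$ with $h\in\mathcal N_G(\Lambda_{k,s})$, so the representatives $f_1,f_2,f_3$ never contribute and your parenthetical about them can simply be dropped.
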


\begin{rem} Notice that $\Iso_0(M_{k,i})$ has $G$  as universal covering.

Also note that $\RR \times \Heis_3(\RR)$ does not act by isometries on the quotients $M_{k,i}$ for any $k, i$.  
\end{rem}

Since the projection of the left-invariant vector field  $X_0- X_3$ to $M_{k,s}$ gives a timelike
 Killing vector field one gets the following fact.
\begin{cor}
 All of the compact spaces $M_{k,s}$ are stationary.
\end{cor}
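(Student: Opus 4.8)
The plan is to verify that the left-invariant vector field $X_0-X_3$ on $G$ descends to a well-defined vector field on each quotient $M_{k,s}$ and that its projection is everywhere timelike, since the existence of such a Killing field is exactly the definition of a stationary Lorentzian manifold. First I would recall from the previous lemma that $p:G\to M_{k,s}$ is a pseudo-Riemannian covering, so left-invariant vector fields on $G$ that are $\Lambda_{k,s}$-invariant project to vector fields on the quotient; since left-invariant fields are automatically invariant under left translations by lattice elements, $X_0-X_3$ does project, and it remains Killing because left translations act by isometries. Thus the only genuine point to check is the causal character.

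Next I would compute $g(X_0-X_3,X_0-X_3)$ using the only nonzero inner products among the left-invariant basis, namely $g(X_0,X_3)=g(X_1,X_1)=g(X_2,X_2)=1$ (all other pairings vanishing), which were recorded right after the vector fields $e_i$ and hold verbatim for the $X_i$ as well since $g$ is the same metric. One gets
\begin{equation*}
g(X_0-X_3,X_0-X_3)=g(X_0,X_0)-2g(X_0,X_3)+g(X_3,X_3)=0-2\cdot 1+0=-2<0,
\end{equation*}
so $X_0-X_3$ is timelike at every point of $G$, hence its projection is timelike at every point of $M_{k,s}$ because $p$ is a local isometry. Since $M_{k,s}$ is compact and carries an everywhere timelike Killing field, it is by definition stationary.

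There is essentially no obstacle here: the statement is a direct corollary of the structure already assembled, and the only computation is the one-line inner product above. The sole thing worth a sentence is flagging that $X_0-X_3$ is genuinely left-invariant (so that it is automatically $\Lambda_{k,s}$-invariant on both sides, matching the fact that $\Lambda_{k,s}$ sits inside $G$ and acts by left translations in the model $M_{k,s}=\Lambda_{k,s}\backslash G$), and that its flow consists of isometries because the metric is bi-invariant; both are immediate from the material preceding the corollary.
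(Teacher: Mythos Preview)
Your proposal is correct and follows exactly the paper's approach: the paper simply states, in the sentence immediately preceding the corollary, that the projection of the left-invariant vector field $X_0-X_3$ to $M_{k,s}$ is a timelike Killing vector field. Your write-up merely supplies the routine details the paper omits (the descent to the quotient, the computation $g(X_0-X_3,X_0-X_3)=-2$, and the reason the flow consists of isometries via bi-invariance), so there is nothing to compare or correct.
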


\begin{rem}
 Theorem 4.1 in \cite{PZ} states that when the
identity component of the isometry group is non-compact and it has some timelike orbit,
then it must contain a non-trivial factor locally isomorphic to $SL(2, R)$ or to an oscillator
group.
\end{rem}

\subsection{Geodesics on $M_{k,s}$ }
Our aim here is to study the geodesics on the quotient spaces $M_{k,s}=G/\Lambda_{k,s}$ for 
$s=0,\pi, \pi/2$.  Since $M_{k,s}$ is  a naturally reductive space  the geodesics starting at $p(e)$ are
 precisely the projections of the geodesics of $G$ through the identity element $e$ 
(see Ch. 11 \cite{ON}). Any other geodesic of $G$ is the translation on the left of a geodesic through $e$, giving rise to any geodesic on the quotient. 

Let $\bar{\gamma}(t)=p\circ \gamma(t)$ denote a curve on $M_{k,s}$ with initial velocity
$$\bar{v}=\bar{\gamma}'(0)=dp_e (\gamma'(0)).$$ 
The tangent vector $\bar{v}$ is called 

\begin{itemize}
\item {\bf lightlike or null} if it has null norm.

\item {\bf spacelike} if it has positive norm.

\item {\bf timelike} if it has negative norm.
\end{itemize}

The curve $\bar{\gamma}$ is called {\em lightlike} (resp. {\em spacelike, timelike}) if
 its tangent vector is lightlike (resp. spacelike, timelike) at every point. 

Observe first that a tangent vector $X$ of $G$ of the
 form $X= \sum_{i=0}^3 a_i X_i$ for the left-invariant vector fields $X_i$, is null if it satisfies the condition:
\begin{equation}\label{nullv}
a_1^2+a_2^2+2a_0 a_3=0,
\end{equation}
while other tangent vectors on $G$ satisfying $a_1^2+a_2^2+2a_0 a_3>0$ or $<0$ are either spacelike or timelike respectively.

Let $\alpha$ denote a curve on $G$. Its projection will be denoted by $\bar{\alpha}=p\circ \alpha$. Observe that $\bar{\alpha}$ is self-intersecting if and only if there exist $t_0, t_1 \in \RR$ such that $\alpha(t_1)^{-1} \alpha(t_0)\in \Lambda_{k,s}$

\begin{lem}  Let $G$ denote a Lie group, let $K < G$ be a subgroup of $G$ let $p: G \to G/K$ denote the 
canonical projection. Let $\alpha: \RR \to G$ denote one-parameter subgroup of $G$.
If $p\circ \alpha$ is closed in $G/K$ then it is periodic.

\end{lem}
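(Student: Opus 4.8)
The plan is to view $f:=p\circ\alpha$ as the orbit through $o:=p(e)$ of the flow $\Phi_t(gK):=\alpha(t)gK$ on $G/K$; this is a genuine flow precisely because $\alpha$ is a homomorphism, and $f(t)=\Phi_t(o)$. First I would record the formal equivalence that $f$ is periodic if and only if it is not injective: if $f(t_1)=f(t_2)$ with $t_1\neq t_2$, then $\alpha(t_1-t_2)=\alpha(t_2)^{-1}\alpha(t_1)\in K$, and since $\RR$ is abelian this gives $f(t+(t_1-t_2))=\alpha(t)\alpha(t_1-t_2)K=\alpha(t)K=f(t)$ for all $t$, so $t_1-t_2$ is a period; the converse is clear. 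Since a constant $\alpha$ makes $f$ constant and hence periodic, I may assume $f$ non-constant and, for a contradiction, injective.

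Next I would use the hypothesis: the image $C:=f(\RR)=p(\alpha(\RR))$ is closed in $G/K$, and in the situation where the lemma is applied ($G/K$ one of the compact manifolds $M_{k,s}$, $K$ discrete) this makes $C$ compact. Being a single $\Phi$-orbit, $C$ is $\Phi$-invariant, each $\Phi_t$ restricts to a homeomorphism of $C$, and the family $\{\Phi_t|_C\}_{t\in\RR}$ acts transitively on $C$. The crucial step is to prove that $C$ is a compact topological $1$-manifold without boundary. Since $f$ is injective, each $[-n,n]$ compact and $G/K$ Hausdorff, $f|_{[-n,n]}$ is a homeomorphism onto the closed subset $f([-n,n])$, and $C=\bigcup_n f([-n,n])$. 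As $C$ is compact Hausdorff, hence Baire, some $f([-N,N])$ has non-empty interior $W$ in $C$; then $f^{-1}(W)$ is a non-empty open subset of $\RR$ contained (by injectivity) in $[-N,N]$, so it contains an open interval $(a,b)$ with $-N<a<b<N$. Using that $f|_{[-N,N]}$ is a homeomorphism one checks $f((a,b))$ is open in $C$, so $f|_{(a,b)}$ exhibits an open neighbourhood of a point of $C$ homeomorphic to an open interval; transporting this neighbourhood by the homeomorphisms $\Phi_t|_C$, which carry that point to every point of $C$, shows every point of $C$ has such a neighbourhood.

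To conclude, $f\colon\RR\to C$ is a continuous bijection between $1$-manifolds. Reading $f$ in charts and using that a continuous injection of an interval into $\RR$ is strictly monotone (an intermediate value argument), $f$ maps sufficiently small intervals homeomorphically onto open subsets of $C$; hence $f$ is an open map, and an open continuous bijection is a homeomorphism. Then $C\cong\RR$, contradicting the compactness of $C$. Therefore $f$ is not injective, so it is periodic. $\square$

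The step I expect to be the obstacle is exactly this manifold claim: an injective continuous curve need not be an embedding — its image need not be homeomorphic to $\RR$, as an irrational line in a torus shows — so one cannot immediately forbid a compact image, and it is the Baire-category argument together with the homogeneity of $C$ under the flow that forces $C$ to be a manifold. A secondary point I would flag in the write-up is that compactness of $G/K$ is genuinely used: over a non-compact base there exist injective, non-periodic one-parameter orbits with closed image (a line of irrational slope projected to $S^1\times\RR$), so the hypothesis "closed in $G/K$" must be read in the compact setting where the lemma is applied.
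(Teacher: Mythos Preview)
You have misread the hypothesis. In this paper ``closed'' does not mean that the image $f(\RR)$ is a closed subset of $G/K$; it means that $p\circ\alpha$ is a \emph{closed curve}, i.e.\ self-intersecting. This is clear from the sentence immediately preceding the lemma (``$\bar{\alpha}$ is self-intersecting if and only if there exist $t_0, t_1 \in \RR$ such that $\alpha(t_1)^{-1} \alpha(t_0)\in \Lambda_{k,s}$''), from the corollary that follows (``every closed geodesic in $G/K$ is periodic''), and from the whole discussion of closed versus non-closed geodesics on $M_{k,i}$.

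Under the intended reading the lemma is the elementary implication ``not injective $\Rightarrow$ periodic'', and the paper's proof is exactly the two-line computation you yourself wrote down in your first paragraph: if $f(t_0)=f(t_1)$ then $\alpha(t_0-t_1)\in K$, and since $\alpha$ is a homomorphism this forces $f(t+T)=f(t)$ with $T=t_1-t_0$. Everything from your second paragraph onward --- the Baire-category argument, the $1$-manifold structure on the orbit closure, the compactness of $G/K$ --- is addressed to a different and substantially harder statement that the paper never asserts. Your own final caveat (that the topological reading fails without compactness, which the lemma does not assume) is in fact a symptom of the misreading.
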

\begin{proof} Assume there exist $t_0, t_1\in \RR$ such that
$\bar{\alpha}(t_0)=\bar{\alpha}(t_1)$. Thus $\alpha(t_1)^{-1}\alpha(t_0)\in K$. Since 
$\alpha$ is a one-parameter subgroup it holds $\alpha(t_0-t_1)\in K$. Set $T=t_1-t_0$ then 
$\alpha(s+T)=\alpha(s) \alpha(T)$ and so  $\bar{\alpha}(t+T)=\bar{\alpha}(t)$ for all $t\in \RR$. 
  
\end{proof}

\begin{cor}
Let $G/K$ be a naturally reductive pseudo-Riemannian space. Then every closed geodesic in $G/K$ is periodic. 
\end{cor}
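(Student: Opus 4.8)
The plan is to derive this corollary immediately from the preceding lemma together with the description of geodesics on naturally reductive spaces recalled earlier in the paper. First I would recall that if $G/K$ is naturally reductive, then by the lemma stated before (and the discussion in Chapter 11 of \cite{ON}) every geodesic of $G/K$ through the base point $o=p(e)$ has the form $p(\exp tX)$ for some $X\in\ggo$; that is, it is the projection under $p$ of the one-parameter subgroup $\alpha(t)=\exp(tX)$ of $G$. More generally, any geodesic of $G/K$ is of the form $\tau_h\circ\big(p(\exp tX)\big)$ for some $h\in G$, since $\tau_h$ is an isometry and acts transitively on $G/K$.

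Next I would reduce to the base-point case. Let $\bar\gamma$ be a closed geodesic in $G/K$; write $\bar\gamma=\tau_h\circ\bar\beta$ where $\bar\beta(t)=p(\exp tX)$ is a geodesic through $o$. Since $\tau_h$ is a diffeomorphism, $\bar\beta=\tau_h^{-1}\circ\bar\gamma$ is again closed, i.e.\ $\bar\beta(t_0)=\bar\beta(t_1)$ for some $t_0\neq t_1$. Now $\bar\beta=p\circ\alpha$ with $\alpha(t)=\exp(tX)$ a one-parameter subgroup of $G$, so the previous lemma applies directly: $\bar\beta$ is periodic, say $\bar\beta(t+T)=\bar\beta(t)$ for all $t$ with $T=t_1-t_0$. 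Applying $\tau_h$ gives $\bar\gamma(t+T)=\tau_h(\bar\beta(t+T))=\tau_h(\bar\beta(t))=\bar\gamma(t)$ for all $t\in\RR$, so $\bar\gamma$ is periodic.

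There is essentially no obstacle here; the only point requiring a little care is the reduction from an arbitrary closed geodesic to one through the base point, which uses that $G$ acts transitively by isometries on $G/K$ via the maps $\tau_h$ and that these maps permute the geodesics. Once that is in place, the lemma does all the work: the key observation is that "closed" for a $p$-image of a one-parameter subgroup forces a genuine group-element relation $\alpha(T)\in K$, and homogeneity of the one-parameter subgroup then upgrades one coincidence of values into full periodicity. So I would simply state the two sentences of this reduction and cite the lemma.
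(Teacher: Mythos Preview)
Your proof is correct and matches the paper's intent: the paper states this corollary without proof, as an immediate consequence of the preceding lemma together with the standard description of geodesics on naturally reductive spaces (every geodesic is a left-translate of a projected one-parameter subgroup). Your explicit reduction to the base point via $\tau_h$ is exactly the missing sentence the paper leaves to the reader.
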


The next step is to apply this result to study periodic geodesics on the quotient spaces 
$M_{k,i}$ $i=0, \pi, \pi/2$. Geodesics on $M_{k,s}$ are induced by one-parameter subgroups 
of $G$ since the metric of $G$ is bi-invariant.

Indeed a geodesic $\alpha$ on $G$ through $e$ with tangent vector
 $X=\sum_{i=0}^3 a_i X_i$ gives rise
 to a closed geodesic on $M_{k,0}$ if and only if there exists $T\in \RR$ 
such that $\alpha(T)\in \Lambda_{k,0}$, which

\begin{itemize} 
 \item for  $a_0\neq 0$ gives the following condition
\begin{equation}\label{closednz}
\begin{array}{rcl}
a_0 T & \in & 2 \pi \ZZ \\
a_0^{-1} (R(a_0T) J - J)(a_1, a_2)^t & \in & \ZZ \times \ZZ \\
\displaystyle{(\frac{a_1^2+a_2^2}{2a_0} + a_3)T - \frac{a_1^2+a_2^2}{a_0^2} \sin(a_0T)} & \in &  \frac1{2k} \ZZ.
\end{array}
\end{equation}
Notice that if the first condition holds then $R(a_0 T)$ is the identity map so that 
$R(a_0T) J - J=0$ and the second condition is satisfied for all $a_1, a_2 \in \RR$. Since 
$a_0T\in 2 \pi \ZZ$ then $\sin(a_0T)=0$ and the third condition reduces to 
\begin{equation}\label{closed}
\displaystyle{\frac{||X||^2}{2a_0} T= (\frac{a_1^2+a_2^2}{2a_0} + a_3)}T \in  \frac1{2k} \ZZ.
\end{equation}
Hence if $a_0\neq 0$ the condition of $p \circ \alpha$ being closed on $M_{k,0}$
 reduces to (\ref{closed}). 

For spacelike or timelike geodesics, that is $||X||^2>0$ or $||X||^2<0$ respectively, where $||X||^2= < X, X >$ 
closed geodesics on $M_{k,0}$ are determined by the conditions
$$
a_0 T \in  2 \pi s \qquad \mbox{ and } \qquad \frac{||X||^2}{2a_0} T = 
\frac{m}{2k} \quad \mbox{ for }m,s \in \ZZ.
$$

\item  For $a_0=0$ notice the geodesic $\bar{\alpha}$ is closed if there exists $T\in \RR$ such that 
\begin{equation}\label{closedz}
\begin{array}{rcl}
(a_1 T, a_2 T)^t & \in & \ZZ \times \ZZ \\
a_3 T & \in &  \frac1{2k} \ZZ
\end{array}
\end{equation}
Thus on $G$ a null geodesic is $\alpha(s)=(0,0,0,a_3 s)$ which gives rise to a periodic geodesic on $M_{k,s}$ if and only if
$ a_3 T  \in   \frac1{2k} \ZZ$. 
Therefore
\begin{itemize}
\item  every lightlike geodesic on $M_{k,0}$ is closed.
\item there are periodic and non-closed timelike and spacelike geodesics on $M_{k,0}$.
\end{itemize}
\end{itemize}

\begin{thm} Let  $M_{k,i}$ denote  the solvmanifolds as in (\ref{solvm}). 
\begin{itemize}
\item Every null geodesic is periodic on $M_{k,i}$ for $i=0,\pi, \pi/2$.
\item There are  closed  and non closed timelike and spacelike geodesics on $M_{k, i}$ for $i=0,\pi, \pi/2$.
\end{itemize}
\end{thm}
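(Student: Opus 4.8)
The plan is to deduce everything from the one–parameter subgroups $s\mapsto\exp(sX)$, $X\in\ggo$, using the explicit description of $\exp$ obtained in Subsection~\ref{geodes} together with the lattice inclusions $\Lambda_{k,0}\triangleleft\Lambda_{k,\pi}\triangleleft\Lambda_{k,\pi/2}$ from \eqref{lati}. Since each $M_{k,i}$ is naturally reductive and $p\colon G\to M_{k,i}$ is a pseudo-Riemannian covering, every geodesic of $M_{k,i}$ is a left translate of some $s\mapsto p(\exp(sX))$; as observed above, $s\mapsto p(h\exp(sX))$ closes up at a parameter $T\neq 0$ precisely when $\exp(TX)\in\Lambda_{k,i}$, a condition independent of $h$, and in that case the geodesic is periodic by the Corollary that closed geodesics on naturally reductive spaces are periodic. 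Hence it suffices to produce, for each $k$, one closed spacelike and one closed timelike geodesic already on the smallest space $M_{k,0}$ (they then remain closed on $M_{k,\pi}$ and $M_{k,\pi/2}$), one non-closed spacelike and one non-closed timelike geodesic on the largest space $M_{k,\pi/2}$ (they are then non-closed on all three), and to handle null geodesics uniformly in $i$.

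For the null part, take $X=\sum_i a_iX_i$ with $X\neq 0$ and $a_1^2+a_2^2+2a_0a_3=0$ as in \eqref{nullv}. If $a_0=0$ then $a_1=a_2=0$, so $\exp(sX)=(0,0,0,a_3s)$ with $a_3\neq 0$, and $\exp\!\bigl(\tfrac1{2ka_3}X\bigr)=(0,0,0,\tfrac1{2k})\in\Gamma_k\subseteq\Lambda_{k,i}$ for every $i$; the geodesic is therefore closed and periodic. If $a_0\neq 0$, I would choose $T=\tfrac{2\pi n}{a_0}$ with $n\in\ZZ\setminus\{0\}$: then $R(a_0T)=\mathrm{Id}$ and $\sin(a_0T)=0$, so in the formula for $\exp(TX)$ the $\RR^2$–component vanishes and the last component equals $\tfrac{T}{2a_0}(a_1^2+a_2^2+2a_0a_3)=0$; thus $\exp(TX)=(2\pi n,0,0,0)\in 2\pi\ZZ\times\Gamma_k=\Lambda_{k,0}\subseteq\Lambda_{k,\pi}\subseteq\Lambda_{k,\pi/2}$, and the (non-constant) geodesic is closed, hence periodic, on every $M_{k,i}$. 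This proves the first item.

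For the second item I would simply exhibit vectors. Putting $X=2\pi X_0\pm\tfrac1{2k}X_3$ gives $||X||^2=\pm\tfrac{2\pi}{k}$ and $\exp(X)=(2\pi,0,0,\pm\tfrac1{2k})\in\Lambda_{k,0}$, so the corresponding spacelike ($+$) and timelike ($-$) geodesics are closed on $M_{k,0}$, hence on all $M_{k,i}$. Putting instead $Y=X_0\pm X_3$ gives $||Y||^2=\pm 2$ and $\exp(sY)=(s,0,0,\pm s)$; if $\exp(TY)\in\Lambda_{k,\pi/2}$ then the first coordinate forces $T\in\tfrac{\pi}{2}\ZZ$ and the last forces $\pm T\in\tfrac1{2k}\ZZ$, and $\tfrac{\pi}{2}\ZZ\cap\tfrac1{2k}\ZZ=\{0\}$ because $\pi$ is irrational; so $T=0$ and $Y$ yields a non-closed spacelike and a non-closed timelike geodesic on $M_{k,\pi/2}$, and therefore on all $M_{k,i}$.

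I expect the only point requiring care to be the null geodesics with $a_0\neq 0$ in the cases $i=\pi,\pi/2$: there the closure condition on the first coordinate, $a_0T\in\tfrac{\pi}{2}\ZZ$ (or $\pi\ZZ$), allows parameters $T$ for which $R(a_0T)\neq\mathrm{Id}$, and one is tempted to chase those. The key observation is that it is enough to restrict to $a_0T\in 2\pi\ZZ$, since precisely for a null $X$ this choice also annihilates the last ($z$) coordinate of $\exp(TX)$; this is what makes the argument for periodicity go through uniformly in $i$, essentially reducing the cases $i=\pi,\pi/2$ to the case $i=0$ already treated.
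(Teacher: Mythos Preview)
Your argument is correct and follows the same approach as the paper: reduce to one-parameter subgroups, split on $a_0=0$ versus $a_0\neq 0$, and observe that for null $X$ with $a_0\neq 0$ the choice $a_0T\in 2\pi\ZZ$ forces the $z$-component of $\exp(TX)$ to vanish via \eqref{closed}. Your use of the lattice inclusions $\Lambda_{k,0}\triangleleft\Lambda_{k,\pi}\triangleleft\Lambda_{k,\pi/2}$ to transfer closed geodesics upward and non-closed geodesics downward is a slight streamlining of the paper's ``analogous arguments'' remark, but the underlying method is identical.
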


 For the other families of lattices $\Lambda_{k,\pi}$ and $\Lambda_{k, \pi/2}$ one should 
modify the equations in (\ref{closednz}) and (\ref{closedz}) to get the condition for 
$\bar{\alpha}$ to  be closed. Analogous arguments prove all the assertions of the Theorem. One should notice that 
the analysis in these cases gives some extra geodesics once $a_0T= \pi m$ or $a_0T= \frac{\pi m}2$ for some $m\in \ZZ$.

\begin{rem} Every compact manifold $M_{k,i}$ is even-dimensional and orientable.  Compare with 
 Theorem 2 in  \cite{Ga}.

The Ricci tensor on $G$ verifies
$$Ric(X,X)= \frac12 a_0^2 \geq 0 \qquad \mbox{ for } \qquad X= a_0 \deat + V, \quad V \in span\{\deaz, \deax, \deay\} 
 $$
 and since $p$ is a local isometry, $G$ so as their quotients satisfy the lightlike and timelike 
convergence conditions. 
 
\end{rem}


\begin{thebibliography}{GGGG}

\bibitem{AS} {\sc S. Adams, G. Stuck}, {\it The isometry group of a compact Lorentz manifold I}, Inv. Math. {\bf 129}(2) (1997), 239–261.

\bibitem{BO} {\sc V. del Barco, G. P. Ovando}, {\it Isometric actions on pseudo-Riemannian nilmanifolds}, 
Ann. Glob. Anal. Geom.  (2013).

\bibitem{BOV} {\sc V. del Barco, G. P. Ovando, F. Vittone}, {\it On the Isometry Groups of Invariant Lorentzian
 Metrics on the Heisenberg Group}, to appear Medit. J. Math. (2013) (arXiv:1211.0884).

\bibitem{BCL} {\sc  W. Batat, G. Calvaruso, B. De Leo}, {\it   Homogeneous Lorentzian 3-manifolds with a parallel null vector field},  Balkan J. Geom. Appl. {\bf 14} 1, 11--20 (2009).
                              
\bibitem{Bre} {\sc G. E. Bredon}, {\it Topology and Geometry}, Springer-Verlag (1993).

\bibitem{CV} {\sc G. Calvarusso, J. van der Veken}, 
{\it Parallel surfaces in Lorentzian three-manifolds admitting a parallel null vector field}, J. Phys. A, Math. Theor. {\bf 43} No. 32, Article ID 325207, 9 p. (2010). 

\bibitem{COS} {\sc S. Console, G. Ovando, M. Subils}, {\it Solvable models for Kodaira surfaces}, (2011) arxiv: 1111.2417.

\bibitem{Du} {\sc Z. Dusek}, {\it The existence of homogeneous geodesics in homogeneous pseudo-Riemannian and affine manifolds}, J. Geom. Phys. {\bf 6} (5) 687--689 (2010).

\bibitem{Du2} {\sc Z. Dusek}, {\it Survey on homogeneous geodesics}, Note Mat. {\bf 1} (suppl. no. 1) 147--168 (2008).
\bibitem{EG} {\sc J. H. Eschenburg, G. J. Galloway}, {\it Lines in space-time}, Comm. in Math. Phys. {\bf 148} (1992), 209--216.



\bibitem{FJP} {\sc J. L. Flores, M. A. Javaloyes and P. Piccione}, {\it
 Periodic geodesics and geometry of compact Lorentzian
manifolds with a Killing vector field},  Math. Z. {\bf 267} (2011), 221--233.

\bibitem{Ga}  {\sc  G.J. Galloway}, {\it Closed timelike geodesics}, Trans. Amer. Math. Soc.{\bf  285} (1984), 379--388.

\bibitem{Ga1} {\sc  G.J. Galloway}, {\it Compact Lorentz manifolds without closed nonspacelike 
geodesics}, Proc. Amer. Math. Soc. {\bf 98} (1986), 119--123.

\bibitem{Ge} {\sc M. Guediri}, {\it On the nonexistence of closed timelike
geodesics in flat Lorentz 2-step nilmanifolds}, Trans. Am. Math. Soc. {\bf 355} (2002), 775-786.

\bibitem{Ge1} {\sc M. Guediri}, {\it On the nonexistence of closed timelike
geodesics in flat Lorentz 2-step nilmanifolds}, Trans. Am. Math. Soc. {\bf 355} (2002), 775-786.

\bibitem{KN}{\sc S. Kobayashi, K. Nomizu}, {\it Foundations of differential geometry}, Interscience Publ. (1969).


\bibitem{Mu} {\sc D. M\"uller}, {\it Isometries of bi-invariant pseudo-Riemannian metrics 
on Lie groups}.  Geom. Dedicata  {\bf 29} no. 1  (1989), 65 - 96.


\bibitem{Me} {\sc K. Melnick}, {\it Isometric Actions of Heisenberg Groups
on Compact Lorentz Manifolds}, Geom. Dedicata {\bf 126} (2007), 131--154.

\bibitem{ON} {\sc B. O'Neill}, {\it Semi-Riemannian geometry with
applications to relativity}, Academic Press (1983).

\bibitem{PZ} {\sc P. Piccione, A. Zeghib}, {\it Actions of discrete groups on stationary
Lorentz manifolds}, Ergod. Th. \& Dynam. Sys. (2013), First online version. 
 

\bibitem{Ra} {\sc M. S. Raghunathan}, {\it  Discrete subgroups of Lie groups}, Springer Verlag  (1972).


\bibitem{Su} {\sc S. Suhr}, {\it Closed geodesics in Lorentzian surfaces}, Trans. of the Amer. Math. Soc. 
{\bf 365} (2013), 1469--1486.

\bibitem{Ti} {\sc F.T. Tipler}, {\it Existence of closed timelike geodesics in Lorentz spaces},
 Proc. Amer. Math. Soc. {\bf 76} (1979), 145--147. 


%

%
\bibitem{Ze1} {\sc A. Zeghib}, {\it The identity
 component of the isometry group of a compact Lorentz manifold}, Duke Math. J. {\bf 92}(2)
 (1998), 321--333.

\bibitem{Ze} {\sc A. Zeghib}, {\it Isometry groups and geodesic foliations
of Lorentz manifolds}, GAFA Geom. funct. anal. {\bf 9} (1999), 775--822.


\end{thebibliography}
\end{document}